\newtheorem{theorem}{Theorem}
\newtheorem{lemma}[theorem]{Lemma}
\newtheorem{corollary}[theorem]{Corollary}
\newtheorem{proposition}[theorem]{Proposition}
\theoremstyle{definition}
\newtheorem{definition}{Definition}
\theoremstyle{remark}
\newtheorem*{remark}{Remark}
\newtheorem*{example}{Example}
\numberwithin{theorem}{section}
\numberwithin{equation}{section}
\newcommand{\N}{\mathbb{N}}
\newcommand{\Z}{\mathbb{Z}}
\newcommand{\R}{\mathbb{R}}
\newcommand{\C}{\mathbb{C}}
\newcommand{\Q}{\mathbb{Q}}
\newcommand{\SL}{{\text {\rm SL}}}
\newcommand{\sgn}{\operatorname{sgn}}
\newcommand{\im}{\textnormal{Im}}
\newcommand{\vast}{\bBigg@{3.5}}
\newcommand{\Vast}{\bBigg@{4}}
\renewcommand{\pmod}[1]{\  \,  \left( \mathrm{mod} \,  #1 \right)}
\begin{document}

\title{Vector-valued higher depth quantum modular forms and Higher Mordell integrals.}
\author{Kathrin Bringmann, Jonas Kaszian, Antun Milas}
\maketitle

\begin{abstract} We introduce vector-valued higher depth quantum modular forms and investigate examples coming from characters of representations of vertex algebras. These are expressed 
as rank two false theta series, generalizing unary false theta series studied from several standpoints. We also discuss certain double integrals, which may be viewed as the obstruction
to modularity of the depth two quantum modular forms.
We then find explicit formulas for the double error integrals in the form reminiscent of the classical Mordell integral. 
%In this paper, we introduce vector-valued higher depth quantum modular properties and investigate examples coming from characters of certain vertex algebras. 
%We then find two-dimensional Mordell integral representations for their errors of modularity. 
%{\bf KB: Should something like vertex operator algebra be in the title? AM:  But in the paper we don't say anything about vertex algebra, except briefly in the intro. I'm afraid  once we put %"..character...algebra" in the title they will send it to somebody in my area, or maybe that's our plan :-)? KB: It just seems once things come from somewhere there is more interest. I do not %think the referee from part 1 was from ``your area''.}
%In this paper, a continuation of \cite{BKM} we examine double Mordell and Eichler integrals in the setup of higher depth quantum modular forms.

\end{abstract}

%{\bf TD: make sure there are no problems for $p=2$. AM: I suggest we first fix $p=2$ instead of adding details - we might 
%end up changing quite a lot if something is wrong. For the conference proceedings I'm fine with any combination, especially 
%if Part 1 gets in.}

\section{Introduction and statement of results}

\subsection{Mordell integrals and quantum modular forms}
The  Mordell integral is usually defined as a function of two variables 
\begin{equation} \label{zwegers}
 h(z) = h(z;\tau) := \int_{\mathbb{R}}  \frac{\cosh(2 \pi z w)}{\cosh(\pi w)}e^{ \pi i \tau w^2}dw,
\end{equation}
 where $z \in \mathbb{C}$ and  $\tau \in \mathbb{H}$, the complex upper half-plane. Integrals of this form were studied by many mathematicians including Kronecker, Lerch, Ramanujan, Riemann, Siegel, and of course Mordell, who proved that a whole family of integrals reduces to \eqref{zwegers}. From these works it is also known that \eqref{zwegers} occurs as the ``error of modularity'' of Lerch sums which have the shape ($q:=e^{2\pi i\tau}$)
 \begin{equation*}
 \sum_{n\in\Z} \frac{e^{2\pi inz_1} q^{\frac{n^2+n}{2}}}{1-e^{2\pi iz_2}q^n}\quad (z_1,z_2\in\mathbb{C}\setminus\{0\}).
 \end{equation*}
 
%There are modifications of this integral where the trigonometric part of the integrand is replaced by related trigonometric functions. In many instances  $z$ is specialized to be a rational %number so the integral only depends on $\tau$. 
%Eichler integrals were completely transformed by Zwegers in his remarkable PhD thesis \cite{Zw}. Among several important discoveries . Around the same time, D. Zagier 
%discovered the first examples what ended up being the theory of quantum modular forms. For instance, for the quantum modular form associated to Kontsevich 
%"strange" the releavant Eichler integral is 

The Mordell integral plays an important role in the theory of mock modular forms as shown by Zwegers in his remarkable thesis \cite{Zw1}. Zwegers wrote the integrals in \eqref{zwegers} as Eichler integrals. To be more precise, he showed that, for $a,b\in(-\frac12,\frac12)$ we have
\begin{equation}\label{errormod}
h(a\tau-b) = -e^{-2\pi ia\left(b+\frac12\right)} q^{\frac{a^2}{2}}\int_0^{i\infty} \frac{g_{a+\frac12,b+\frac12}(w)}{\sqrt{-i(w+\tau)}}dw,
\end{equation}
where, for $a,b\in \R$, $g_{a,b}$ is the weight $\frac32$ unary theta function defined by
\begin{equation*}
g_{a,b}(\tau) := \sum_{n\in a+\Z} n e^{2\pi ibn} q^{\frac{n^2}{2}}.
\end{equation*}
Zwegers used \eqref{errormod} to find a completion of Lerch sums, by observing that the error of modularity $h(a\tau-b)$ also appears from integrals which have $-\overline{\tau}$ instead of $0$ as the lower integration limit.

%{\bf KB: I think this is a bit confusing as Zwegers defines
%	\begin{align*}
%	h(z;\tau):=\int_{\mathbb R}\frac{e^{\pi i\tau x^2-2\pi zx}}{\cosh(\pi x)}dx=\frac12\int_{\mathbb R}\frac{e^{\pi i\tau x^2}\left(e^{2\pi zx}+e^{-2\pi z x}\right)}{\cosh(\pi x)}dx=\int_{\mathbb R}%\frac{e^{\pi i\tau x^2}\cosh(2\pi zx)}{\cosh(\pi x)}dx,
%	\end{align*}
%	So I would take the original definition. If you prefer hyperbolic one I would take the final expression. Your function is $2h(iz;\tau)$. AM: I changed to trignometric.}

%An important part in played by the following easily provable but fundamental identity

%Eichler and Mordell integrals play a
%The next section discusses the connection between the Mordell integral and Zagier?s quantum
%modular forms. 

Starting with influential work of Zagier \cite{Za1, Za2}, many authors studied related constructions with Eichler integrals from the perspective of quantum modular forms.
In all of these examples the non-holomorphic part (or ``companion'') is given as 
\begin{equation*}
\int_{-\overline{\tau}}^{i \infty}  \frac{f(w)}{(-i (\tau+w))^{\frac32}} dw,
\end{equation*}
where $f$ is a cuspidal theta function of  weight $\frac{1}{2}$ or $\frac32$. 

%The error of modularity  Eichler integral has often for the error term a Mordell-type integrals. 
%There are various applications of this concepts - for instance 

%As an application we give evaluations of special values of the Mordell
%integral in terms of $q$-hypergeometric series. For instance, the Eichler integral relevant for the 
%has shape 
%$$\int_{-\bar{\tau}}^{i \infty} \frac{\eta(w)}{(-i (\tau+w))^{3/2}} dw$$
%Their "Mordell" integral have somewhat different shape due to weight $\frac{1}{2}$

The main motivation for this paper is to extend this well-known connection between Eichler and Mordell integrals to higher dimensions by using multiple integrals. 
We provide several explicit examples of this connection in the context of higher depth quantum modular forms introduced by the authors in \cite{BKM} (see also \cite{ABMP}).

\subsection{Vertex algebras and modular invariance of characters}
%{\bf AM: Please do not make any changes here}
Another, somewhat unrelated, motivation for this project comes from the study of characters in non-rational conformal field theories, where the modularity (or lack thereof) plays an important  role. 
%(e.g. Moonshine, $N=2$ 4d .

There is already a growing body of research exploring the modularity of characters beyond  the rational vertex operator algebras 
\cite{CM1,CM2,CR,CMW,STT}.
% enjoy a version of modular invariane 
%in the sense that (in addition to discrete part) there is already . We should say that are usually mixed,where the modular part 
%s easily controlable (usually a power of Dedekind eta) so here we only focus on the non-trivial interesting part. Basically we expect 
One general feature of these irrational theories is that they admit typical
modules (labelled by a continuous parameter) and atypical modules
(parametrized by a discrete set which is mostly infinite).  When it comes to modular transformation
properties, the $S$-transformation (with $S:=(\begin{smallmatrix}
0&-1\\1&0
\end{smallmatrix})\in \text{SL}_2(\mathbb Z)$) of a character may produce both typical and atypical characters.
So we expect that
\begin{equation} \label{gen.mod} 
{\rm ch}[{M}]\left(-\frac{1}{\tau}\right) =  \int_{\Omega} S_{M,\nu} {\rm ch}[{M_{\nu}}](\tau) d \nu  +{ \sum_{j \in \mathcal{D}} \alpha_{M,j} {\rm ch}[{M_j}] (\tau)},
\end{equation}
where ${\rm ch}[{M_j}]$ are atypical and ${\rm ch}[{M_\nu}]$  are typical characters. Note that the typical characters often have the form
$\text{ch}[M_\nu]=\frac{q^{\frac{\nu^2}{2}}}{\eta(\tau)^m}$, where $\eta(\tau):=q^{\frac{1}{24}}\prod_{n\geq 1}(1-q^n)$ is Dedekind's $\eta$-function.
Moreover $\Omega$ and $\mathcal{D}$ are domains parametrizing typicals and  atypicals representations, respectively. The reader should exercise caution here -- in some examples 
formulas  like (\ref{gen.mod}) only exist as formal distributions \cite{CR}. Also, as divergent integrals might appear, it is sometimes necessary to introduce additional variables (as in \cite{CM1}) 
to ensure convergence.

This type of generalized modularity is known to hold for characters of certain representations of the affine Lie superalgebras $\widehat{\text{sl}(n|1)}$ for $\mathcal N=2$ and $\mathcal N=4$
superconformal algebras at admissible levels \cite{STT,KW}. In this work atypical characters transform as in  (\ref{gen.mod}) such that the integral part is a Mordell-type integral, which is essentially a consequence of Zwegers' thesis \cite{Zw1}.

In this paper we take a slightly different point of view. As many important (algebraic, analytic and categorical) properties of rational vertex algebras 
are captured by the entries of the $S$-matrix (e.g. quantum dimensions, fusion rules), we expect that the full asymptotic expansion 
of characters and their quotients play a pivotal role for irrational theories. 
More precisely, we believe that these higher coefficients in the asymptotics determine the  ``fusion variety'' via resummation and regularization. The latter was introduced by Creutzig and the third author \cite{CM1}. As shown in \cite{BM}, considerations of asymptotic expansion of characters naturally lead to quantum modular forms.
%ocus not on the characters itself but rather ont  asymptotic expansion of modules and quotient of modules as 
%$\tau=it$, as $ t \mapsto 0$.

We now explain, with an example, how the concept of quantum modular forms can be used to obtain (\ref{gen.mod}).
For this we consider the $(1,p)$-singlet algebra and its characters. As explained in \cite{CM1,CMW}, this vertex algebra admits typical and atypical representations. The characters of 
atypical representations $M_{r,s}$ are essentially false theta functions. To be more precise, for $1 \leq s \leq p-1$ and $r \in \mathbb{Z}$, we have
\begin{equation*}
 {\rm ch}[M_{r,s}](\tau) = \frac{1}{\eta (\tau)} \sum_{n\geq 0} \left(
 q^{\frac{1}{4p} \left(2pn -s-pr +2p\right)^2} - q^{\frac{1}{4p} \left(2pn +s-pr+2p \right)^2}\right).
\end{equation*}
Two of the authors have already proven in \cite{BM} that these characters are mixed quantum modular forms, in the sense that  $\mathcal{M}_{r,s}(\tau):=\eta(\tau) {\rm ch}[M_{r,s}](\tau) $ is a weight $\frac{1}{2}$ quantum modular form whose companion (expressed as an Eichler integral) agrees with the original false theta function. 
To be more precise, if we write the asymptotic expansion of the false theta function $f$ as ($\frac hk \in \Q, N \in  \N_0$)
\begin{equation*}
f \left( \frac hk + it \right) = \sum_{n=0}^{N-1} a_{h,k}(n) t^n + O \left(t^N \right),
\end{equation*}
then the Eichler integral $g$ has the expansion
\begin{equation*}
g \left( \frac hk +it \right) = \sum_{n=0}^{N-1} a_{-h,k} (n) (-t)^n + O \left( t^N \right).
\end{equation*}
to all orders when expanding at roots of unity. 
This allows us to transfer modularity questions for characters to better behaved companions $\mathcal{M}^*_{r,s}$ as illustrated in the following 
\begin{example}\label{mainthm} For $1 \leq s \leq p-1$, we have
\begin{equation} \label{qmf.ex}
%&\mathcal{M}^*_{1, s}(\tau+1)=e^{ \frac{ \pi i s}{2p}} \mathcal{M}^*_{1,s}(\tau)\\
\mathcal{M}^*_{1,s}\left(\tau\right) -\frac{1}{\sqrt{-i\tau}} \sqrt{\frac{2}{p}}\sum_{k=1}^{p-1} \sin\left(\frac{\pi k(p-s)}{p}\right)\mathcal{M}^*_{1,k}\left(-\frac{1}{\tau}\right)=i\sqrt{2p} \cdot  r_{f_{p-s,p}}(\tau),
\end{equation}
where $r_{j_{j,p}}$ is the theta integral defined in \eqref{rf} for the theta function \eqref{3p}. Note that $r_{f_{j,p}}$ also has  the following representation as Mordell integral
$$r_{f_{j,p}}(\tau)=-\int_{\R} \cot\left(\pi i w+\frac{\pi j}{2p}\right)e^{2\pi i p w^2\tau}dw.$$
As typical characters take the form $\frac{e^{2 \pi p i \tau x^2}}{\eta(\tau)},$
the right-hand side of (\ref{qmf.ex}) can be viewed as the contribution from typical representations  as in (\ref{gen.mod}).
%\\sqrt{-i \tau} sum_{i=1}^{p-1} \mathcal{M}_{1,i}(\tau)+\sqrt{-i \tau}\int....$$
%For $r \neq 1$ , modular transformations of  $\mathcal{M}_{r,s}(\tau)$ are easily obtained from essentially by subtracting a finite $q$-series from $\eta(\tau) \mathcal{M}_{1, s}$ whose modular %properties are easily computable. 
%For $\mathcal{M}_{r,s}$ a 
For ${\rm ch}[{M}_{r,s}]$ with $r \neq 1$ a finite $q$-series has to be added to ${\rm ch}[{M}_{1,s}]$ so that the above formula looks slightly more complicated (cf. \cite{BM,CM1}).
\end{example} 
%For $\mathcal{M}_{r,s}$, $r \neq 1$, a finite $q$-series has to be added to $\mathcal{M}_{1,s}$ so the above formula looks slightly more complicated (we omit explicit formulae here).

It is desirable to extend the modularity result in \eqref{qmf.ex} to ``higher rank'' $W$-algebras, where false theta functions of higher rank appear as characters \cite{BM2}. 
It was already observed earlier \cite{CM2} that a regularization procedure can be used to derive a more complicated version of (\ref{gen.mod}) involving iterated integrals. As the theory of higher depth quantum modular forms also involves multiple integrals \cite{BKM}, it is tempting to conjecture that these characters combine into vector-valued higher depth quantum modular forms. In this paper, we prove an analogue of \eqref{qmf.ex} this for the simplest nontrivial example coming from an $\frak{sl}_3$ false 
theta function $F(q)$ which was studied recently in \cite{BKM}. 

%Thus it is important 
%to understand this first nontrivial example.
%Notice that Theorem \ref{mainthm} can be interpreted as decomposition into atypical and typical. This is why we refer to this as quantum modular form version of modular invariance 
%of singlet characters.

%{\bf AM: This should be a generalization of the formula for $\tilde{F}_{j,p}(-\frac{1}{\tau})$. }

%These functions known to have bad modular transformation properties. However, if we view them as quantum modular forms, this allows us to 
%compute modular transformation properties with correction terms expressed as Eichler-type integrals.

\subsection{Quantum invariants of knots and $3$-manifolds}
As discussed above quantum modular forms are connected to various aspects of number theory including Maass forms. But originally they appeared in the pioneering work of Zagier (and Zagier-Lawrence) on 
unified quantum invariants of certain $3$-manifolds \cite{Za1,Za2}.  In a recent work of Gukov, Pei, Putrov, and Vafa \cite{GPPV}, the authors proposed new quantum invariants of certain $3$--manifolds expressed as holomorphic $q$-series with 
integral coefficients.  These invariants are in many examples sums of ordinary quantum modular forms. It is expected that more general $3$-manifolds  as well as $\operatorname{SU}(3)$ unified WRT invariants exhibit a more complicated higher depth quantum modularity. 
Understanding their error of modularity certainly requires a solid understanding of higher Mordell integrals.
\subsection{Statement of results}

Define
\begin{equation*}
F(q):= \!\!\!\! \sum_{m_1,m_2 \geq 1 \atop{ m_1 \equiv m_2 \pmod{3}}} \!\!\!\!\!\!\!\! {\rm min}(m_1,m_2) q^{\frac{p}{3}\left(m_1^2+m_2^2+m_1m_2\right)-m_1-m_2+\frac1p} \left(1-q^{m_1}\right)\left(1-q^{m_2}\right)\left(1-q^{m_1 + m_2}\right).
\end{equation*}
In \cite{BKM} the authors decomposed this function as $F(q)=\frac{2}{p}F_1(q^p)+2F_2(q^p)$ with $F_1$ and $F_2$ defined in \eqref{defineF1} and \eqref{defineF2}, respectively.  The function $F_1$ and $F_2$ turn out to have generalized quantum modular properties. This connection goes asymptotically via two-dimensional Eichler integrals. For instance, we showed in \cite{BKM} that $F_1$ asymptotically agrees with an integral of the shape
$$
\int_{-\overline{\tau}}^{i\infty}\int_{w_1}^{i\infty} \frac{f(w_1,w_2)}{\sqrt{-i(w_1+\tau)}\sqrt{-i(w_2+\tau)}}dw_2dw_1
$$
where $f\in S_{\frac{3}{2}}(\chi_1,\Gamma)\otimes S_{\frac32}(\chi_2,\Gamma)$ ($\chi_j$ are certain multipliers and $\Gamma\subset\SL_2(\Z)$).
The modular properties of these integrals follow from the modularity of $f$ which in turn gives quantum modular properties of $F_1$. We call the resulting functions higher depth quantum modular forms. Roughly speaking, depth two quantum modular forms satisfy, in the simplest case, the modular transformation property with $M=\left(\begin{smallmatrix} a & b \\ c & d
\end{smallmatrix}\right) \in \SL_2(\Z)$
%For example, depth two quantum modular satisfy the modular transformation property ($M=\left(\begin{smallmatrix} a & b \\ c & d
%\end{smallmatrix}\right) \in \Gamma\subset\SL_2(\Z)$)
\begin{equation}\label{Higherdepth}
f(\tau)-(c\tau+d)^{-k}f(M\tau) \in  \mathcal{Q}_{\kappa} (\Gamma)\mathcal O(R) + \mathcal O(R),
\end{equation}
where  $\mathcal{Q}_{\kappa} (\Gamma)$ is the space of quantum modular forms of weight $\kappa$ and $\mathcal O(R)$ the space of real-analytic functions defined on $R\subset\R$.
Clearly, we can construct examples of depth two simply by multiplying two (depth one) quantum modular forms. In this paper, we prove a vector-valued  version which refines \eqref{Higherdepth}. Roughly speaking, $f(M\tau)$ in \eqref{Higherdepth} is replaced by $\sum_{1\leq\ell\leq N}\chi_{j,\ell}(M)f_{\ell}(M\tau)$ (see Definition 2 for the notation). 

Objects of similar nature -  not invariant under the action of the relevant group but instead they satisfy "higher order" functional equations - have  already appeared in the literature.
{\em Higher-order} modular forms constitute a natural extension of the notion of classical modular form and can be constructed using iterated integrals  \cite{DH,DS}; see also \cite{Ma}. They also appear in connection to percolation theory in mathematical physics \cite{KZ}.

We prove the following theorem.
\begin{theorem}\label{maintheorem1}
	The function $F_1$ is a component of a vector-valued depth two quantum modular form of weight one. The function $F_2$ is a component of a vector-valued quantum modular form of depth two and weight two.
\end{theorem}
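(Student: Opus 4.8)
The plan is to realize $F_1$ and $F_2$ as (linear combinations of) rank two false theta series attached to the $\frak{sl}_3$ root lattice, and then to control their asymptotic behavior at rational points by comparison with the corresponding double Eichler integrals. Concretely, I would start from the decomposition $F(q) = \frac2p F_1(q^p) + 2 F_2(q^p)$ recalled in the statement, expand the three binomial factors $(1-q^{m_1})(1-q^{m_2})(1-q^{m_1+m_2})$, and collapse the $\min(m_1,m_2)$ weight using the standard trick (write $\min(m_1,m_2) = \sum_{j\geq 1} \mathbf 1[m_1\geq j]\mathbf 1[m_2\geq j]$, or differentiate a geometric series) so that $F_1$ and $F_2$ become sums of rank two partial theta functions of the shape $\sum_{\mathbf n\in P} \operatorname{sgn}(\cdot)\operatorname{sgn}(\cdot)\, \mathbf n \cdot q^{Q(\mathbf n)}$ for an explicit positive definite quadratic form $Q$ coming from $m_1^2 + m_2^2 + m_1 m_2$ and a polynomial prefactor of degree $0$ (for $F_2$, contributing weight two) or degree $1$ in the summation variables after one application of the $\min$ identity (for $F_1$, contributing weight one). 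This is essentially the normalization already carried out in \cite{BKM}; I would simply import it.

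Next I would introduce the completed objects: the double Eichler integrals
\begin{equation*}
\widehat F_j(\tau) := \int_{-\overline\tau}^{i\infty}\int_{w_1}^{i\infty} \frac{f_j(w_1,w_2)}{\sqrt{-i(w_1+\tau)}\sqrt{-i(w_2+\tau)}}\,dw_2\,dw_1,
\end{equation*}
where $f_j$ lies in $S_{\frac32}(\chi_1,\Gamma)\otimes S_{\frac32}(\chi_2,\Gamma)$ for an explicit congruence subgroup $\Gamma$ and explicit multiplier systems, chosen so that the theta kernel matches $Q$. The modular transformation law of $\widehat F_j$ under $M\in\SL_2(\Z)$ follows from the (vector-valued) modularity of the cusp forms $f_j$: splitting each of the two iterated integrals at the cusp and at $-\overline\tau$ and applying the weight $\frac32$ transformation to $f_j$ produces period integrals that are real-analytic on the appropriate $R\subset\R$, plus cross terms that are (products of) weight $\frac12$ Eichler integrals of the component theta functions — i.e. elements of $\mathcal Q_\kappa(\Gamma)\mathcal O(R) + \mathcal O(R)$, which is exactly the shape demanded by \eqref{Higherdepth}, now in a vector-valued refinement where $f_j(M\tau)$ is replaced by $\sum_\ell \chi_{j,\ell}(M) f_{j,\ell}(M\tau)$. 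The weights $1$ and $2$ are forced by the degree of the polynomial prefactor: two half-integral Eichler integrals of weights $\frac12$ combine to a depth two object of weight $1$, and shifting up by the extra factor from the $\min$-identity accounts for the difference between $F_1$ (weight one) and $F_2$ (weight two).

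The remaining — and genuinely substantive — step is to show that $F_j$ itself and its completed companion $\widehat F_j$ have \emph{the same asymptotic expansion to all orders} at every rational point $\frac hk$, so that the quantum modular transformation law of $\widehat F_j$ descends to $F_j$ modulo the error space. Here I would follow the strategy of \cite{BM}: use an Euler–Maclaurin type / Mellin transform analysis of the false theta series $F_j(\frac hk + it)$ as $t\to 0^+$, expressing the coefficients $a_{h,k}(n)$ in terms of $L$-values of the underlying periodic functions, and separately expand the double integral $\widehat F_j(\frac hk + it)$ by iterated application of the one-dimensional asymptotic matching (the boxed statement in the excerpt: the Eichler integral $g$ of a false theta $f$ has expansion with $a_{h,k}(n)$ replaced by $a_{-h,k}(n)(-t)^n$) to each of the two integration variables. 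I expect the main obstacle to be exactly this two-variable asymptotic bookkeeping: in depth two the two integrations interact, the inner integral's lower limit $w_1$ is itself the outer variable, so one must control a genuinely iterated (rather than product) asymptotic expansion and verify that the cross terms arising from the modular splitting match on both sides. Handling the multiplier systems $\chi_{j,\ell}$ and the precise vector-valued index set $1\le \ell\le N$ correctly through this computation — so that the identity is not merely componentwise asymptotic but respects the $\SL_2(\Z)$-action up to the stated error — is where the care is needed; the rest is the (lengthy but routine) theta-function and Euler–Maclaurin algebra already developed in \cite{BKM,BM}.
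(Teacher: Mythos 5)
Your overall route---identify $F_1,F_2$ asymptotically at rationals with double Eichler integrals built from weight $\frac32$ theta kernels, then obtain the transformation law by splitting the iterated integrals at $0$ and $i\infty$---is the same skeleton as the paper's (the splitting is exactly Lemma \ref{Itra}, and the asymptotic matching \eqref{AsE1}, \eqref{AsE2} is simply quoted from \cite{BKM}). The genuine gap is that the theorem asserts $F_1$ and $F_2$ are \emph{components of} vector-valued depth two quantum modular forms, and your proposal never constructs the finite closed family that realizes this. Saying that the kernels are components of vector-valued cusp forms and that ``the vector-valued refinement is where care is needed'' is not a proof: one must exhibit an explicit finite collection of double Eichler integrals of the same shape, containing $\mathbb E_1$ (resp.\ $\mathbb E_2$), which is closed under $S$ and $T$ up to errors in $\sum_m\mathcal Q_{\kappa_m}(\chi_m)\mathcal O(R)+\mathcal O(R)$, together with the resulting multiplier $\boldsymbol\chi$; otherwise Definition \ref{defgen} is not verified. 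This is precisely the new content of Section 5: using \eqref{Thetneg} one shows that in $\mathcal E_1=\sum_{\boldsymbol\alpha\in\mathscr S^*}\varepsilon(\boldsymbol\alpha)\mathcal E_{1,\boldsymbol\alpha}(p\,\cdot)$ the $\theta_1$-contribution cancels and the $\theta_2$-contribution collapses into a product of two unary Shimura theta functions, giving $\mathcal E_1=J_{(1,3)}$ in \eqref{EI}; the family $J_{\boldsymbol k}$ (with the built-in shifts $(k_1+p\delta,k_2+3p\delta)$, $\delta\in\{0,1\}$) is then shown in Proposition \ref{propJ} to close under $\tau\mapsto-\frac1\tau$ with root-of-unity coefficients, the $\delta$-shifts being exactly what lets the change of variables $k_1\mapsto k_1+p\delta$, $k_2\mapsto k_2+3p\delta$ return the system to itself. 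For $F_2$ closure is even less automatic: the kernels $\theta_3,\theta_4,\theta_5$ mix under $S$, and one needs the nonobvious combination $\mathcal K_{\boldsymbol k}=2\mathcal J_{\boldsymbol k}+\mathcal J_{\left(\frac{k_1+k_2}{2},\frac{k_2-3k_1}{2}\right)}$, with $\mathcal E_2=\mathcal K_{(1,3)}$ as in \eqref{E2K}, to get the closed transformation of Proposition \ref{propGenTrans}. Your plan contains no mechanism for producing these closed systems, and without them you only relate $\mathbb E_j(\tau)$ to \emph{some} other integrals, not to a vector of companions of which $F_j$ is a component.

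Conversely, you place the main burden on re-proving the two-variable asymptotic agreement, which the paper does not redo: Theorem \ref{quantumref} only uses $\widehat F_j\!\left(\frac hk\right)=\lim_{t\to0^+}\mathbb E_j\!\left(\frac{it}{2\pi}-\frac hk\right)$, imported from \cite{BKM}. If you did want to re-derive it, be aware that iterating the one-dimensional statement variable by variable does not work as described, precisely for the reason you flag (the inner lower limit is the outer variable); the argument in \cite{BKM} is a genuinely two-dimensional Euler--Maclaurin expansion, not a composition of two one-dimensional ones. So the proposal is sound in outline but misses the key construction (the $J_{\boldsymbol k}$ and $\mathcal K_{\boldsymbol k}$ systems and their $S$-closure) that turns ``satisfies some higher-depth transformation'' into ``is a component of a vector-valued depth two quantum modular form.''
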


We next consider higher-dimensional Mordell integrals. Set, for $\boldsymbol{\alpha}\in\R^2$,
\begin{equation*}
H_{1,\boldsymbol{\alpha}}(\tau) := -\sqrt{3}
\int_0^{i\infty}\int_{w_1}^{i\infty}\frac{\theta_1\left(\boldsymbol{\alpha}; \boldsymbol{w}\right)+\theta_2\left(\boldsymbol{\alpha}; \boldsymbol{w}\right)}{\sqrt{-i\left(w_1+\tau\right)}\sqrt{-i\left(w_2+\tau\right)}}dw_2dw_1
,
\end{equation*}
where the theta functions $\theta_1$ and $\theta_2$  are defined in \eqref{thet1} and \eqref{thet2}, respectively and where throughout the paper we write two-dimensional vectors in bold letters and their components using subscript.

\begin{remark}
Related, but different, iterated integrals were studied by Manin in his work on non-commutative modular symbols \cite{Ma}. 
For further developments see also \cite{Br1,Br2}.
\end{remark}
\begin{remark}
	The function $H_{1,\boldsymbol{\alpha}}$ occurs (basically) as the holomorphic error of modularity (see Proposition \ref{propE1}). The remaining piece is itself already an Eichler integral. 
\end{remark}
Setting
\begin{equation*}
\mathcal F_\alpha(x) := \frac{\sinh(2\pi x)}{\cosh(2\pi x)-\cos(2\pi \alpha)},\qquad \mathcal G_{\alpha}(x):=\frac{\sin(2\pi\alpha)}{\cosh(2\pi x)-\cos(2\pi\alpha)},
\end{equation*} 
we define
\begin{equation*}
g_{1,\boldsymbol{\alpha}}(\boldsymbol{w}):=\begin{cases}
2\mathcal G_{\alpha_1}(w_1)\mathcal G_{\alpha_2}(w_2) -2\mathcal F_{\alpha_1}(w_1)\mathcal F_{\alpha_2}(w_2)&\quad\textnormal{ if } \alpha_1,\alpha_2\notin\Z,\\
-2\mathcal F_0(w_1)\mathcal F_{\alpha_2}(w_2) + \frac{2}{\pi w_1}\mathcal F_{\alpha_2}\left(w_2+\frac{3 w_1}{2}\right)&\quad\textnormal{ if } \alpha_1\in\Z,\,\alpha_2\notin\Z,\\
-2\mathcal F_{\alpha_1}(w_1)\mathcal F_0(w_2) + \frac{2}{\pi w_2}\mathcal F_{\alpha_1}\left(w_1+\frac{w_2}{2}\right)&\quad\textnormal{ if } \alpha_1\notin\Z,\, \alpha_2\in\Z.
\end{cases}
\end{equation*}

\begin{theorem}\label{theoremHigherMordell}	
If $\alpha_1,\alpha_2$ are not both in $\Z$, then we have, with $Q(\boldsymbol{w}):=3w_1^2+w_2^2+3w_1w_2$
\begin{equation*}
H_{1,\boldsymbol{\alpha}}(\tau) = \int_{\R^2} g_{1,\boldsymbol{\alpha}}(\boldsymbol{w}) e^{2\pi i\tau Q(\boldsymbol{w})}dw_1dw_2.
\end{equation*}
In particular, if $\alpha_j \notin \Z$ for $j=1,2$, then we have
	\begin{align*}
	H_{1,\boldsymbol{\alpha}}(\tau)=\int_{\R^2}
	\cot\left(\pi iw_1+\pi\alpha_1\right)
	\cot\left(\pi iw_2+\pi\alpha_2\right)e^{2\pi i\tau Q(\boldsymbol{w})}
	dw_1dw_2.
	\end{align*}

\end{theorem}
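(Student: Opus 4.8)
The plan is to start from the definition of $H_{1,\boldsymbol{\alpha}}$ as a double Eichler integral and reverse the order of operations: first write $\theta_1+\theta_2$ explicitly as a sum over a lattice coset with a weight-$\frac32\otimes\frac32$ factor $n_1 n_2 q^{Q(\boldsymbol n)}$ (times exponentials in $\boldsymbol\alpha$), then interchange the lattice sum with the iterated integral over $w_2$ and $w_1$. For each fixed lattice point the inner object is the classical one-dimensional computation underlying Zwegers' identity \eqref{errormod}: an integral of $n e^{2\pi i b n}w^{?}$-type against $(-i(w+\tau))^{-1/2}$ produces, after the substitution $w\mapsto$ a real variable, a Gaussian $e^{2\pi i\tau(\text{quadratic})}$ times an elementary kernel. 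Carrying this out in both variables $w_1,w_2$ turns the double Eichler integral into a double Gaussian integral $\int_{\R^2}(\cdots)e^{2\pi i\tau Q(\boldsymbol w)}dw_1dw_2$, and the ``$\cdots$'' is exactly the lattice sum that resums to the hyperbolic kernels $\mathcal F_{\alpha_j},\mathcal G_{\alpha_j}$ via the standard partial-fraction/geometric-series expansions
\[
\sum_{n\ge 1}e^{-2\pi nx}\sin(2\pi\alpha n)=\tfrac12\,\mathcal G_\alpha(x),\qquad \sum_{n\ge1}e^{-2\pi nx}\cos(2\pi\alpha n)=\tfrac12\bigl(\mathcal F_\alpha(x)-1\bigr)\quad(x>0),
\]
together with their derivatives/reflections to handle the $\cosh$-type pieces. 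The precise combination $2\mathcal G_{\alpha_1}\mathcal G_{\alpha_2}-2\mathcal F_{\alpha_1}\mathcal F_{\alpha_2}$ should fall out of combining the two theta functions $\theta_1,\theta_2$, whose characteristics differ by a sign that flips a $\cosh$ into a $\sinh$.

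Concretely, the steps I would carry out, in order, are: (1) expand $\theta_1(\boldsymbol\alpha;\boldsymbol w)+\theta_2(\boldsymbol\alpha;\boldsymbol w)$ from \eqref{thet1}, \eqref{thet2} as an explicit double series over the relevant coset of the $A_2$-type lattice defining $Q$, isolating the bilinear factor $n_1n_2$ and the $\boldsymbol\alpha$-dependent phase; (2) justify interchanging this absolutely convergent series (for $\tau\in\H$, after a contour deformation making the $w_j$-integrals converge) with the iterated integral; (3) for a single lattice point, evaluate $\int_0^{i\infty}\!\int_{w_1}^{i\infty} n_1n_2 e^{2\pi i Q(\boldsymbol n)w_?}\cdots\,(-i(w_1+\tau))^{-1/2}(-i(w_2+\tau))^{-1/2}dw_2dw_1$ by the Zwegers-type lemma, i.e.\ change variables $w_j=\,$(something like)$\,i u_j-\tau$ and complete the square, producing $e^{2\pi i\tau Q(\boldsymbol w)}$ times a factor exponentially decaying in $\boldsymbol n$; (4) resum over the lattice using the geometric-series identities above (and their even analogues for $\cosh$) to recognize $g_{1,\boldsymbol\alpha}(\boldsymbol w)$; (5) reassemble, keeping track of the prefactor $-\sqrt3$ and the normalization of $Q$, to get the first displayed formula; (6) for the special case $\alpha_1,\alpha_2\notin\Z$, observe $2\mathcal G_\alpha(w)\mathcal G_\beta(w')-2\mathcal F_\alpha(w)\mathcal F_\beta(w')$ collapses (via the product-to-sum identity for $\cot$, i.e.\ $\cot(\pi i w+\pi\alpha)=\frac{\sinh(2\pi w)-i\sin(2\pi\alpha)}{\cosh(2\pi w)-\cos(2\pi\alpha)}=\mathcal F_\alpha(w)-i\mathcal G_\alpha(w)$, so that $\cot(\pi i w_1+\pi\alpha_1)\cot(\pi i w_2+\pi\alpha_2)$ has real part exactly $\mathcal F_{\alpha_1}\mathcal F_{\alpha_2}-\mathcal G_{\alpha_1}\mathcal G_{\alpha_2}$) to the cotangent product; here one uses that the imaginary cross terms integrate to zero against the even Gaussian $e^{2\pi i\tau Q(\boldsymbol w)}$ by the symmetry $\boldsymbol w\mapsto-\boldsymbol w$.

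I expect the main obstacle to be step (2)–(3): making the interchange of summation and the \emph{iterated} (non-product) integral rigorous, and correctly performing the nested contour shifts so that both $w_1$- and $w_2$-integrals become genuine Gaussian integrals over $\R$ with the \emph{same} quadratic form $Q$ appearing in the exponent. The inner integral's lower limit $w_1$ (rather than a fixed point) couples the two variables, so the completion-of-the-square bookkeeping must be done simultaneously in $\boldsymbol w$ using the bilinear form attached to $Q$, and one must check the error/boundary terms from the contour deformation vanish — this is exactly where the rank-two analogue of \eqref{errormod} is more delicate than the classical one-variable case. The resummation in step (4) and the cotangent identity in step (6) are, by contrast, routine once the kernel is in hand. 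A clean way to organize the argument is to first prove a one-variable lemma (the precise Zwegers-type evaluation with the kernels $\mathcal F,\mathcal G$) and then apply it twice, absorbing the $w_1$-dependence of the inner limit into a shifted argument as already visible in the $\alpha_j\in\Z$ cases of $g_{1,\boldsymbol\alpha}$ (the terms $\mathcal F_{\alpha_2}(w_2+\frac{3w_1}{2})$ and $\mathcal F_{\alpha_1}(w_1+\frac{w_2}{2})$ encode precisely this coupling through the off-diagonal part of $Q$).
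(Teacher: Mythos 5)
Your overall architecture (expand over the lattice, evaluate termwise, resum the kernel, then use parity to pass to the cotangent form) is the same shape as the paper's argument, but the step you yourself flag as the main obstacle is precisely where a key idea is missing, and it cannot be supplied by ``a one-variable lemma applied twice.'' Because the inner integral has lower limit $w_1$ and $Q$ has the cross term $3w_1w_2$, the single-lattice-point iterated integral is an irreducibly two-dimensional object; the paper handles it with the generalized error function $M_2$ of \cite{ABMP}. Concretely, by the identity imported from (6.1) of \cite{BKM}, the $\theta_1$-term and the $\theta_2$-term attached to the \emph{same} $\boldsymbol{n}$ add up to $M_2\bigl(\sqrt3;\sqrt{\tfrac v2}(\sqrt3(2n_1+n_2),n_2)\bigr)e^{2\pi Q(\boldsymbol n)v}$ — this is the actual mechanism by which the two theta functions ``combine,'' not a $\cosh$/$\sinh$ bookkeeping fact one can leave implicit — and then the contour representation \eqref{defineM2}, after the change of variables $w_1\mapsto 2\sqrt3\,w_1+\sqrt3\,w_2$, turns each summand into a Gaussian $e^{-2\pi vQ(\boldsymbol w)}$ against the rational kernel $\bigl((w_1-in_1)(w_2-in_2)\bigr)^{-1}$. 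The resummation is then over the full shifted lattice $\boldsymbol\alpha+\Z^2$ via $\pi\cot(\pi x)=\lim_{r\to\infty}\sum_{|k|\le r}(x+k)^{-1}$, not via your geometric series over $n\ge1$, which would force a decomposition of $\R^2$ into regions of convergence and reintroduce incomplete (error-function) integrals. Also, your cotangent identity has real and imaginary parts interchanged: correctly, $\cot(\pi i w+\pi\alpha)=\mathcal G_\alpha(w)-i\mathcal F_\alpha(w)$, so the real part of the product is $\mathcal G_{\alpha_1}\mathcal G_{\alpha_2}-\mathcal F_{\alpha_1}\mathcal F_{\alpha_2}$, not its negative (the vanishing of the imaginary part under $\boldsymbol w\mapsto-\boldsymbol w$ is fine).

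A second genuine gap: the theorem also asserts the formula when exactly one $\alpha_j\in\Z$, and your proposal never proves anything in that case. There $\mathcal G_{\alpha_j}\equiv0$ and $\mathcal F_0(w)=\coth(\pi w)$ has a nonintegrable pole at $w=0$, so the generic kernel cannot simply be specialized; the paper obtains this case by letting $\alpha_1\to0$, subtracting the pole $\tfrac1{\pi w_1}$ from $\mathcal F_0$, splitting $\mathcal F_{\alpha_2}(w_2)$ as in \eqref{Fdec} into a difference plus the shifted term $\mathcal F_{\alpha_2}\bigl(w_2\pm\tfrac{3w_1}2\bigr)$, and showing the remaining integral vanishes via the shift $w_2\mapsto w_2+3w_1$. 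That regularization is exactly where the extra terms $\tfrac2{\pi w_1}\mathcal F_{\alpha_2}\bigl(w_2+\tfrac{3w_1}2\bigr)$ in $g_{1,\boldsymbol\alpha}$ come from; remarking that they ``encode the coupling'' is not a proof. If you add the $M_2$ identity with its contour representation, the cotangent partial-fraction resummation, and the limiting argument for the mixed case, your outline becomes essentially the paper's proof.
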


\begin{remark}
	 Note that there is a related statement if $\alpha_1,\alpha_2\in\mathbb Z$; however for the purpose of this paper it is not required.
\end{remark}
Similarly, set
\begin{align*}
H_{2,\boldsymbol{\alpha}}(\tau) &:= \frac{\sqrt{3}i}{2\pi}\int_0^{i\infty} \int_{w_1}^{i\infty} \frac{2\theta_3(\boldsymbol{\alpha};\boldsymbol{w}) - \theta_4(\boldsymbol{\alpha}; \boldsymbol{w})}{\sqrt{-i(w_1+\tau)}(-i(w_2+\tau))^{\frac32}}dw_2dw_1 \\
&\qquad+\frac{\sqrt{3}i}{2\pi}\int_0^{i\infty} \int_{w_1}^{i\infty} \frac{\theta_5(\boldsymbol{\alpha}; \boldsymbol{w})}{(-i(w_1+\tau))^{\frac32}\sqrt{-i(w_2+\tau)}}dw_2dw_1,
\end{align*}
where $\theta_3$, $\theta_4$, and $\theta_5$ are theta functions defined in \eqref{thet3}, \eqref{thet4}, and \eqref{thet5}, respectively. The function $ H_{2,\boldsymbol{\alpha}}$ occurs in Proposition \ref{propE1}. 

Define the function $g_{2,\alpha}$ as follows: 
\begin{equation*}
g_{2,\boldsymbol{\alpha}}(\boldsymbol{w}):=\begin{cases}
-2i w_2\left(\mathcal G_{\alpha_1}(w_1)\mathcal F_{\alpha_2}(w_2)+\mathcal F_{\alpha_1}(w_1)\mathcal G_{\alpha_2}(w_2)\right)&\quad\textnormal{ if } \alpha_1\notin\Z,\\
-2i \left(\mathcal F_{0}(w_1)\mathcal G_{\alpha_2}^*(w_2)-\frac{1}{\pi w_1} \mathcal G_{\alpha_2}^*\left(w_2+\frac{3 w_1}{2}\right)\right)&\quad\textnormal{ if } \alpha_1\in\Z,
\end{cases}
\end{equation*}
where  $\mathcal G_\alpha^*(x):=x\mathcal G_{\alpha}(x)$.
\begin{theorem}\label{diffTh}
	We have
	\begin{equation*}
	H_{2,\boldsymbol{\alpha}}(\tau) = \int_{\R^2} g_{2,\boldsymbol{\alpha}}(\boldsymbol{w}) e^{2\pi i\tau Q(\boldsymbol{w})} dw_1dw_2.
	\end{equation*}
\end{theorem}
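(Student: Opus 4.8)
The plan is to reduce Theorem \ref{diffTh} to Theorem \ref{theoremHigherMordell} (and its analogue in the integral case $\alpha_1\in\Z$) by exploiting the fact that the extra power of $-i(w_2+\tau)$ in the denominator of $H_{2,\boldsymbol{\alpha}}$, compared to $H_{1,\boldsymbol{\alpha}}$, is produced by an extra factor of $n_2$ (or more precisely a linear form in the summation variable) inside the relevant theta functions $\theta_3,\theta_4,\theta_5$ versus $\theta_1,\theta_2$. Concretely, I would first recall the explicit $q$-series definitions of $\theta_3,\theta_4,\theta_5$ from \eqref{thet3}--\eqref{thet5} and observe that each is obtained from the corresponding building block of $\theta_1+\theta_2$ by inserting a linear factor; the weight $\frac32$ in the second slot then comes from the standard identity $\int_{w_1}^{i\infty} n\, e^{\pi i n^2 w_2}(-i(w_2+\tau))^{-\frac32}dw_2 = (\text{const})\cdot e^{\pi i n^2(-\tau)}/\sqrt{-i(w_1+\tau)}\cdots$, i.e. differentiation under the integral sign in $\tau$ turns a weight $\frac12$ Eichler integral into a weight $\frac32$ one up to boundary terms.

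The key computational step is therefore: apply the differential operator $\frac{1}{2\pi i}\partial_\tau$ (or rather the appropriate lowering-type operator adapted to the iterated integral) to the identity of Theorem \ref{theoremHigherMordell}. On the Mordell side, $\partial_\tau$ acting on $\int_{\R^2} g_{1,\boldsymbol{\alpha}}(\boldsymbol{w}) e^{2\pi i\tau Q(\boldsymbol{w})}dw_1dw_2$ simply brings down a factor $2\pi i Q(\boldsymbol{w})$; one then rewrites $Q(\boldsymbol{w}) = w_2\cdot(\text{linear}) + (\text{rest})$ and integrates by parts in $w_2$ to move the polynomial weight onto the kernel, producing exactly $g_{2,\boldsymbol{\alpha}}$ after identifying $\mathcal G^*_\alpha(x)=x\mathcal G_\alpha(x)$ and using that $\partial_x$ of $\mathcal F_\alpha,\mathcal G_\alpha$ reproduces products of the same functions (these are the addition/derivative formulas for $\mathcal F_\alpha$ and $\mathcal G_\alpha$ which presumably appear in the proof of Theorem \ref{theoremHigherMordell}). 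On the Eichler-integral side, $\partial_\tau$ hitting the double integral $H_{1,\boldsymbol{\alpha}}$ differentiates the two factors $(-i(w_j+\tau))^{-\frac12}$: differentiating the $w_2$-factor yields the first integral in $H_{2,\boldsymbol{\alpha}}$ with the $(-i(w_2+\tau))^{-\frac32}$, while differentiating the $w_1$-factor, after integrating by parts in $w_1$ to push the derivative back onto $w_2$ (using $\theta_j(\boldsymbol{\alpha};(w_1,w_1))$ boundary contributions), produces the second integral with $(-i(w_1+\tau))^{-\frac32}$. Matching $2\theta_3-\theta_4$ and $\theta_5$ against the derivatives of $\theta_1+\theta_2$ is then a bookkeeping exercise, and the $\frac{\sqrt3 i}{2\pi}$ normalization is fixed by comparing with the $-\sqrt3$ in $H_{1,\boldsymbol{\alpha}}$.

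The main obstacle will be handling the boundary terms correctly. Integrating by parts in $w_1$ on the iterated integral $\int_0^{i\infty}\int_{w_1}^{i\infty}\cdots$ produces a contribution from the diagonal $w_2=w_1$ and from the endpoints $w_1=0$ and $w_1=i\infty$; one must check these either vanish (by decay/vanishing of the theta functions at cusps, or by the explicit $\alpha$-dependence) or get absorbed into the stated expression. Likewise, on the Mordell side the integration by parts in $w_2\in\R$ requires that $g_{1,\boldsymbol{\alpha}}(\boldsymbol{w})e^{2\pi i\tau Q(\boldsymbol{w})}$ decays as $w_2\to\pm\infty$ — this is where the Gaussian decay $e^{2\pi i\tau Q}$ with $\tau\in\H$ (and the boundedness of $\mathcal F_\alpha,\mathcal G_\alpha$, resp. the $1/w$ growth in the integral cases) must be invoked, and one has to be careful that after bringing down $Q(\boldsymbol{w})$ the remaining integrand is still absolutely integrable so that Fubini and differentiation under the integral sign are justified. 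The case split $\alpha_1\in\Z$ versus $\alpha_1\notin\Z$ must be carried through in parallel, since the shifted-argument terms $\frac{1}{\pi w_1}\mathcal G^*_{\alpha_2}(w_2+\frac{3w_1}{2})$ arise from the degenerate limit of $\mathcal G_{\alpha_1}$ and require the analogous degenerate version of Theorem \ref{theoremHigherMordell} alluded to in the remark; once that input is granted, the differentiation argument is uniform.
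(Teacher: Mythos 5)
There is a genuine gap, and it sits at the very center of your reduction. You propose to obtain Theorem \ref{diffTh} by applying $\tfrac{1}{2\pi i}\partial_\tau$ to the identity of Theorem \ref{theoremHigherMordell}. But on the Eichler side this operation keeps the theta functions $\theta_1+\theta_2$ from \eqref{thet1}--\eqref{thet2} in the numerator and only raises the exponents of $-i(w_j+\tau)$; any subsequent integration by parts in $w_1$ or $w_2$ moves derivatives onto $\theta_1+\theta_2$ and therefore \emph{raises} the degree of the polynomial factor in the lattice variable (producing factors like $(2n_1+n_2)n_2^3$), never lowers it. The kernels of $H_{2,\boldsymbol{\alpha}}$ involve $2\theta_3-\theta_4$ and $\theta_5$ from \eqref{thet3}--\eqref{thet5}, which carry one \emph{fewer} linear factor than $\theta_1+\theta_2$; there is no identity expressing them as $w_j$- or $\tau$-derivatives of $\theta_1+\theta_2$, so the ``bookkeeping exercise'' you invoke cannot be carried out. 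The same mismatch appears on the Mordell side: $\partial_\tau$ brings down the quadratic $Q(\boldsymbol{w})$, and integration by parts in $w_2$ produces terms with $\mathcal F_{\alpha_j}'$, $\mathcal G_{\alpha_j}'$, i.e.\ combinations like $\mathcal G_{\alpha_1}\mathcal G_{\alpha_2}\mathcal F_{\alpha_2}$, not the linear-in-$w_2$ kernel $-2iw_2\left(\mathcal G_{\alpha_1}\mathcal F_{\alpha_2}+\mathcal F_{\alpha_1}\mathcal G_{\alpha_2}\right)$. A parity/reality check already signals the problem: $g_{1,\boldsymbol{\alpha}}$ is real, while $g_{2,\boldsymbol{\alpha}}$ is $w_2$ times (an imaginary multiple of) the \emph{imaginary} part of $\cot(\pi iw_1+\pi\alpha_1)\cot(\pi iw_2+\pi\alpha_2)$, which a $\tau$-derivative of the $H_1$-identity cannot produce.

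The missing idea is that the extra factor must be inserted through an \emph{elliptic} (Jacobi) variable, not through $\tau$. The paper's proof runs parallel to that of Theorem \ref{theoremHigherMordell} but starts from the $z$-derivative at $z=0$ of
\begin{equation*}
M_2\left(\sqrt{3};\sqrt{3v}(2n_1+n_2),\sqrt{v}\left(n_2-\tfrac{2\im(z)}{v}\right)\right)e^{2\pi in_2z},
\end{equation*}
using identities (6.3)--(6.4) of \cite{BKM}: on the iterated-integral side this derivative produces exactly the combination of $\tfrac32$-kernels paired with $2\theta_3-\theta_4$ and $\theta_5$, and on the contour-integral side (via \eqref{MordN} and the same change of variables as before) it inserts the linear factor $w_2$ into the integrand. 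One then sums over the lattice, passes to cotangents by the partial-fraction expansion, observes that the real part of the cotangent product integrates to zero against the odd factor $w_2$, and handles $\alpha_1\in\Z$ as in Theorem \ref{theoremHigherMordell}. If you want to salvage your strategy, replace $\partial_\tau$ by differentiation in an auxiliary variable $z$ inserted as $e^{2\pi i n_2 z}$ (equivalently $e^{2\pi i z L(\boldsymbol{w})}$ for a suitable linear form on the Mordell side); with $\partial_\tau$ alone the two sides of your proposed identity simply do not match.
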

\subsection{Organization of the paper}
The paper is organized as follows. In Section 2, we recall some basic facts on theta functions, certain (generalized) error functions, quantum modular forms, and higher-dimensional quantum modular forms. Section 3 describes the one-dimensional situation, and Section 4 records our previous results in the two-dimensional case. In Section 5 we develop general vector-valued transformations which we then use for our specific situation. In Section 6 we represent the two theta integrals $H_{1,\boldsymbol{\alpha}}$ and $H_{2,\boldsymbol{\alpha}}$ as double Mordell integrals.

\section*{Acknowledgements} 
The research of the first author is supported by the Alfried Krupp Prize for Young University Teachers of the Krupp foundation
and the research leading to these results receives funding from the European Research Council under the European
Union's Seventh Framework Programme (FP/2007-2013) / ERC Grant agreement n. 335220 - AQSER. The research of the second author is partially supported by the European Research Council under the European
Union's Seventh Framework Programme (FP/2007-2013) / ERC Grant agreement n. 335220 - AQSER.
The research of the third author was partially supported by the 
NSF grant DMS-1601070.
The authors thank Caner Nazaroglu for useful conversations and Josh Males for helpful comments on an earlier version of the paper.
\section{Preliminaries}

\subsection{Theta function transformation}
Define, for $\nu\in\{0,1\}$, $h\in\Z$, $N,A\in\N$ with $A\vert N$ and $N\vert hA$, the theta functions studied by Shimura \cite{Sh}
\begin{equation*}
\Theta_\nu(A,h,N;\tau):=\sum_{\substack{m\in\Z\\m\equiv h\pmod{N}}}m^\nu q^{\frac{Am^2}{2N^2}}.
\end{equation*}
%We require the splitting ($c\in\N$)
%\begin{equation}\label{split}
%\Theta_\nu(A,h,N;\tau)=\sum_{\substack{g\equiv h\pmod{N}\\g\pmod{cN}}}\Theta_\nu(cA,g,cN;c\tau).
%\end{equation}
We have the transformation property
\begin{equation}\label{Thetainv}
\Theta_\nu\left(A,h,N;\tau\right)=(-i)^\nu (-i\tau)^{-\frac12-\nu }A^{-\frac12}\sum_{\substack{k\pmod{N} \\ Ak\equiv 0 \pmod{N}}}e\left(\frac{Akh}{N^2}\right)\Theta_\nu \left(A,k,N;-\frac{1}{\tau}\right).
\end{equation}
Also note that if $h_1\equiv h_2\pmod{N}$
\begin{align}\label{Thetneg}
\begin{split}
\Theta_\nu(A,h_1,N;\tau)&=\Theta_\nu(A,h_2,N;\tau), \quad 
\Theta_\nu(A,-h,N;\tau)=(-1)^\nu\Theta_\nu(A,h,N;\tau),\\
 \Theta_\nu(A,N-h,2N;\tau) &=(-1)^\nu \Theta_\nu(A,N+h,2N;\tau).
 \end{split}
\end{align}

\subsection{Special functions}
Following \cite{Zw1}, define for $u\in\R$ 
$$
E(u):=2\int_{0}^{u}e^{-\pi w^2}dw.
$$
We have the representation
\begin{equation*}
E(u)=\sgn(u)\left(1-\frac{1}{\sqrt{\pi }}\Gamma\left(\frac12,\pi u^2\right)\right),
\end{equation*}
where $\Gamma({\alpha},u):=\int_u^\infty e^{-w}w^{{\alpha}-1}dw$ is the {\it incomplete gamma function} and where for $u\in\R$, we let
$$\sgn(u):=\begin{cases}
1 & \text{if $u>0$,}\\
-1 & \text{if $u<0$,}\\
0 & \text{if $u=0$.}
\end{cases}
$$
Moreover, for $u \neq 0$, set
$$
M(u) := \frac{i}{\pi}\int_{\R-iu}\frac{e^{-\pi w^2-2\pi i uw}}{w}dw.
$$
We have
\begin{equation*}\label{defineM}
M(u)=E(u)-\sgn(u).
\end{equation*}

We next turn to two-dimensional analogues, following \cite{ABMP}, however using a slightly different notation.
%{\bf AM: do we explain? KB: What do you want to explain? One can easily see when looking at the paper. I just want to alert readers.}).
Setting $\boldsymbol{dw}:=dw_1dw_2$, define $E_2:\R\times\R^2\rightarrow\R$ by 
$$
E_2(\kappa;\boldsymbol{u}):=\int_{\R^2}\sgn\left(w_1\right)\sgn\left(w_2+\kappa w_1\right)e^{-\pi \left(\left(w_1-u_1\right)^2+\left(w_2-u_2\right)^2\right)}\boldsymbol{dw}.
$$
Moreover for $u_2, u_1- \kappa u_2\neq 0$:
\begin{equation}\label{defineM2}
M_2(\kappa; \boldsymbol{u}):=-\frac{1}{\pi^2} \int_{\mathbb{R} - i u_2}  \int_{\mathbb{R}-iu_1} \frac{e^{-\pi w_1^2-\pi w_2^2-2 \pi i (u_1 w_1+u_2 w_2)}}{w_2(w_1-\kappa w_2)}\boldsymbol{dw}.
\end{equation}
Then we have
\begin{equation}\label{M2E2}\begin{split}
M_2\left(\kappa;\boldsymbol{u}\right)&=E_2\left(\kappa;\boldsymbol{u}\right)-\sgn\left(u_2\right)M\left(u_1\right)\\
&\quad-\sgn\left(u_1-\kappa u_2\right)M_1\left(\frac{u_2+\kappa u_1}{\sqrt{1+\kappa^2}}\right)-\sgn\left(u_1\right)\sgn\left(u_2+\kappa u_1\right).
\end{split}\end{equation}
Note that \eqref{M2E2} extends the definition of $M_2$ to $u_2=0$ or $u_1=\kappa u_2$.
\subsection{Vector-valued quantum modular forms}
We next recall vector-valued quantum modular forms for the modular group.
\begin{definition}
An $N$-tuple $\boldsymbol{f}=(f_1,\dots, f_N)$ of functions 
$f_j: \mathbb{Q} \to \C$  for $1\leq j \leq N $ is called a {\it vector-valued quantum modular form of weight $k\in\frac12\mathbb Z$, multiplier $\boldsymbol{\chi}=(\chi_{j,\ell})_{1\leq j,\ell\leq N}$}, if for all $M=\left(\begin{smallmatrix}
	a & b\\ c & d
	\end{smallmatrix}\right)\in\SL_2(\Z)$, the error of modularity
	\begin{equation}
	\label{qtrans}
	f_j(\tau)-(c\tau+d)^{-k} \sum_{1\leq \ell \leq N} \chi_{j,\ell}(M) f_\ell(M\tau)
	\end{equation}
	can be extended to an open subset of $\R$ and is real-analytic there. We denote the vector space of such forms by $\mathcal Q_k(\chi)$. 
\end{definition}
\begin{remark}
	Since the matrices $S:=\left(\begin{smallmatrix}
	0 & -1 \\ 1 & 0
	\end{smallmatrix}\right)$ and $T:=\left(\begin{smallmatrix}
	1 & 1 \\ 0 & 1
	\end{smallmatrix}\right)$ generate $\SL_2(\Z)$, it is enough to check \eqref{qtrans} for these matrices.
\end{remark}

\subsection{Higher depth vector-valued quantum modular forms}
We next introduce vector-valued higher depth quantum modular forms. Note that higher depth quantum modular forms for subgroups of $\SL_2(\Z)$ were considered in \cite{BKM}.

\begin{definition}\label{defgen}
	An $N$-tuple $\boldsymbol{f}=(f_1,\dots,f_N)$ of functions $f_j: \mathbb{Q} \to\C$ with $1\leq j \leq N$ is called a {\it vector-valued quantum modular form of depth $P\in\N$, weight $k\in\frac12\mathbb Z$, multiplier $\boldsymbol{\chi}=(\chi_{j,\ell})_{1\leq j,\ell\leq N}$}, if for all $M=\left(\begin{smallmatrix}
	a & b\\ c & d
	\end{smallmatrix}\right)\in\SL_2(\Z)$,	we have
	\begin{equation*}
	\left( f_j(\tau)-(c\tau+d)^{-k}\sum_{1\leq \ell \leq N} \chi_{\ell,j}(M) f_\ell(M\tau) \right)_{1 \leq j \leq N} \in  \  \sum_m \mathcal Q_{\kappa_m}^{P-1}(\chi_m)\mathcal O(R),
	\end{equation*}
	where $m$ runs through a finite set, $\kappa_m \in \frac12\Z$,  $\chi_{m}$ are rank $N$ multipliers, $\mathcal O(R)$ is the space of real-analytic functions on $R \subset \R$ which contains an open subset of $\R$, $\mathcal Q_k^1(\chi) := \mathcal Q_k(\chi)$, $\mathcal Q_k^0(\chi):=1$, and $\mathcal Q_{k}^{P}(\chi)$ denotes the space of vector-valued forms of weight $k$, depth $P$,  and multiplier $\boldsymbol{\chi}$.
	
%{\bf AM: We have to fix this definition. The direct sum on the right-hand side (which we also have in \cite{BKM})  should be an ordinary sum I think. Depth condition is 
%filtration not grading. [KB: Where do we say that it is grading?] Any depth one qmf is also of depth two (for fixed weight and $\chi$), right? 
%Also we have (various) $k$-tuples of functions (vector-valued objects) on the right hand side and a scalar object on the left-hand side. I suggest we write: 

%\begin{equation*}
%	(f_j(\tau)-(c\tau+d)^{-k}\sum_{1\leq \ell \leq N} \chi_{\ell,j}(M) f_\ell(M\tau))_{j1 \leq j \leq N} \in  \sum_\ell \mathcal Q_{\kappa_\ell}^{P-1}(\chi_\ell)\mathcal O(R),
%	\end{equation*}
%where $\ell$ runs through a finite set, $\kappa_\ell\in \frac12\Z$,  $\chi_{\ell}$ are rank $N$ multipliers,...,or we can also use $(\mathcal Q_{\kappa_\ell}^{P_\ell}(\chi_\ell))_{j}$ on the right-hand %side
%and explain that $\mathcal Q_{\kappa}^P(\chi)_j$ denotes the $j$-th component. KB: What I do not like about it is that it doesn't look like \eqref{qtrans} then. If we want to write as you propose, %then we should also rewrite \eqref{qtrans}. Also if the above is not correct then Definition 3 of part 1 is also wrong.}

\end{definition}

\section{The one-dimensional case}
Recall the classical false theta functions $(1\leq j\le p-1, p\geq 2)$, 
$$
F_{j,p}(\tau):=\sum_{\substack{n\in\Z \\ n\equiv j \pmod{2p}}}\sgn(n) q^{\frac{n^2}{4p}}.
$$
The following theorem is shown in \cite{BM, CMW} (note that here we renormalized in comparison to \cite{BKM})
\begin{theorem} 
	The functions $F_{j,p}:\mathbb H\rightarrow\C$ $(1\leq j\leq p-1)$ form a vector-valued quantum modular form.
\end{theorem}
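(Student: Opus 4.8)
The plan is to verify the defining property of a vector-valued quantum modular form directly on the generators $S$ and $T$ of $\SL_2(\Z)$, exploiting the fact that each $F_{j,p}$ is built from the Shimura theta functions $\Theta_\nu$ of Section 2.1. First I would rewrite $F_{j,p}$ in terms of the relevant $\Theta_1$-type object: since $F_{j,p}(\tau)=\sum_{n\equiv j(2p)}\sgn(n)q^{n^2/(4p)}$, one recognizes this as (a regularized/Eichler-type partner of) $\Theta_1(A,h,N;\tau)$ with $A=1$, $N=2p$, $h=j$, up to the usual device of writing $\sgn(n)=\tfrac1n\cdot n$ so that the false theta function becomes the Eichler integral of the genuine weight $\tfrac32$ theta $\Theta_1(1,j,2p;\tau)$. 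Concretely, one uses the integral representation $F_{j,p}(\tau)=$ const $\cdot\int_{\tau}^{i\infty}\Theta_1(1,j,2p;w)(-i(w-\tau))^{-1/2}dw$ valid after analytic continuation, which is the standard way such false theta functions are handled.

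The $T$-transformation is immediate: $F_{j,p}(\tau+1)=\zeta F_{j,p}(\tau)$ for an explicit root of unity $\zeta=e(j^2/(4p))$, so the $T$-multiplier is diagonal and the error of modularity for $T$ vanishes identically. The substance is the $S$-transformation. Here I would apply the transformation law \eqref{Thetainv} for $\Theta_1$ with $\nu=1$, $A=1$, $N=2p$: it expresses $\Theta_1(1,j,2p;-1/\tau)$ as $(-i)(-i\tau)^{-3/2}\sum_{k\bmod 2p}e(jk/(4p^2))\Theta_1(1,k,2p;\tau)$, and then the relations \eqref{Thetneg} collapse the $k$-sum over residues to a sum over $1\leq k\leq p-1$ with a sine kernel $\sin(\pi jk/p)$ (exactly the matrix appearing in Example \ref{mainthm}). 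Feeding this into the Eichler integral representation and splitting the path of integration through $0$ (writing $\int_{-1/\tau}^{i\infty}=\int_{-1/\tau}^{0}+\int_0^{i\infty}$ and changing variables $w\mapsto -1/w$ in the first piece) produces the identity
\[
F_{j,p}(\tau)-(-i\tau)^{-1/2}\sqrt{\tfrac{2}{p}}\sum_{k=1}^{p-1}\sin\!\left(\tfrac{\pi jk}{p}\right)F_{k,p}\!\left(-\tfrac1\tau\right)=\text{(an Eichler integral from }0\text{ to }i\infty),
\]
and the right-hand side is manifestly real-analytic on $\R_{>0}$ (indeed on $\R\setminus\{0\}$, and near rationals via the asymptotic-expansion statement quoted in the introduction). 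This exhibits the error of modularity for $S$ as a real-analytic function on an open subset of $\R$, which is exactly what the definition requires.

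The main obstacle, as usual for false theta functions, is not any single transformation but the bookkeeping around the lower endpoint of the Eichler integral: one must justify that the ``completed'' integral $\int_0^{i\infty}\Theta_1(1,j,2p;w)(-i(w-\tau))^{-1/2}dw$ converges and defines a real-analytic function up to the real line, and that the cocycle pieces combine correctly with the half-integral weight automorphy factor (getting the branch of $(c\tau+d)^{-1/2}$ and the attendant eighth root of unity right). Once the $S$- and $T$-identities are in hand, the multiplier system $\boldsymbol{\chi}=(\chi_{j,k})$ is read off — diagonal for $T$, the normalized sine matrix $\sqrt{2/p}\,\sin(\pi jk/p)$ for $S$ — and one checks it is genuinely a representation of $\SL_2(\Z)$ (equivalently, that $S^4=\mathrm{id}$ and $(ST)^3=\mathrm{id}$ hold for these matrices, which is the classical Weil representation relation), completing the proof that $(F_{1,p},\dots,F_{p-1,p})$ is a vector-valued quantum modular form of weight $\tfrac12$.
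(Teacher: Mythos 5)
Your overall strategy -- apply the Shimura transformation \eqref{Thetainv}, use \eqref{Thetneg} to collapse the $k$-sum to the sine matrix $\sqrt{2/p}\,\sin(\pi jk/p)$, and exhibit the error of modularity as an integral from $0$ to $i\infty$ of a weight $\frac32$ theta function against a $(\cdot)^{-1/2}$ kernel -- matches the paper's, but your mechanism is genuinely different. The paper never transforms $F_{j,p}$ directly: it introduces the non-holomorphic companion $F^*_{j,p}$ (the incomplete-Gamma series), identifies it with $-i\sqrt{2p}\,I_{f_{j,p}}$, whose Eichler integral has lower endpoint $-\overline{\tau}$ and kernel $(-i(w+\tau))^{-1/2}$, proves the exact $S$-transformation for $F^*_{j,p}$ with error $i\sqrt{2p}\,r_{f_{j,p}}$ (real-analytic on $\R$), and only then transfers quantum modularity to $F_{j,p}$ through the asymptotic agreement to infinite order at rationals. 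You instead posit an exact identity for $F_{j,p}$ itself on $\H$, as a holomorphic Eichler integral with kernel $(-i(w-\tau))^{-1/2}$ and lower endpoint $\tau$; termwise that representation is in fact correct (once the parameters are fixed, see below), so your route would bypass the companion and the asymptotic matching entirely, which is an attractive simplification if it can be completed.

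Two concrete issues keep it from being a proof. First, the parameters: $\Theta_1(1,j,2p;\tau)$ has exponent $\frac{m^2}{8p^2}$, not $\frac{m^2}{4p}$; you need $A=N=2p$, i.e. $F_{j,p}$ is the Eichler integral of $\Theta_1(2p,j,2p;\cdot)=2p\,f_{j,p}$. Moreover, with $A=1$, $N=2p$, $h=j$ the hypothesis $N\mid hA$ of \eqref{Thetainv} fails for $1\le j\le p-1$, so the transformation law you invoke is not applicable as you state it (the paper applies it with $A=N=2p$, as in \eqref{ThetaB}). Second, the step you defer as ``bookkeeping'' is exactly where your choice of kernel is delicate: $\operatorname{Re}\bigl(-i(w-\tau)\bigr)=\operatorname{Im}(w)-\operatorname{Im}(\tau)$ changes sign, and after the substitution $w\mapsto -1/w$ in the $S$-step the transformed contour can lie on the branch cut of $(-i(z-\tau))^{1/2}$ -- for $\tau$ purely imaginary it is precisely the segment from $\tau$ down to $0$ -- so the splitting $\int_\tau^0=\int_\tau^{i\infty}-\int_0^{i\infty}$ and the determination of the half-integral automorphy factor require an argument you have not supplied. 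The paper's kernel $-i(w+\tau)$ with lower endpoint $-\overline{\tau}$ has positive real part throughout, so the principal branch is unambiguous and the path-splitting is immediate; the price is the companion-plus-asymptotics step, which the paper carries out. So either perform the branch/contour analysis for your holomorphic kernel carefully, or adopt the paper's companion mechanism. (Also, checking that the multiplier is a representation of $\SL_2(\Z)$ is not required by the paper's definition; one only needs the errors for $S$ and $T$ to extend real-analytically.)
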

\begin{proof} (Sketch)
Define the {\it non-holomorphic Eichler integral}
\begin{equation*}
F^*_{j,p}(\tau):=\frac{1}{\sqrt{\pi}}\sum_{\substack{n\in\Z \\ n\equiv j\pmod{2p}}}\sgn(n)\Gamma\left(\frac12,\frac{\pi n^2 v}{p}\right)q^{-\frac{n^2}{4p}}.
\end{equation*}
Note that $F_{j,p}(it+\frac{h}{k})$ and $F^*_{j,p}(it-\frac{h}{k})$ agree asymptotically to infinite order. That is, if we write
\[
F_{j,p}\left(it+\frac{h}{k}\right)\sim \sum_{m\geq 0}a_{h,k}(m)t^m\quad\left(t\rightarrow 0^+\right),
\]
then
\[
F^*_{j,p}\left(it-\frac{h}{k}\right)\sim \sum_{m\geq0}a_{h,k}(m)(-t)^m\quad\left(t\rightarrow0^+\right).
\]
One may then show that 
\begin{equation*}
F_{j,p}^\ast(\tau)=- i\sqrt{2p} \cdot I_{f_{j,p}}(\tau),
\end{equation*}
where 
\begin{equation}
\label{3p}
f_{j,p}(z):=\frac{1}{2p}\sum_{\substack{n\in\Z\\ n\equiv j\pmod{2p} }}nq^{\frac{n^2}{4p}}
\end{equation}
and for a holomorphic modular form from $f$ of weight $k$, the \emph{non-holomorphic Eichler integral} is
\[
I_f(\tau):=\int_{-\overline \tau}^{i\infty}\frac{f(w)}{(-i(\tau+w))^{2-k}}dw.
\]
Using \eqref{Thetainv}, one can prove that 
\begin{equation*}
f_{j,p}\left(\tau\right)= \sqrt{\frac{2}{p}} (- i \tau)^{-\frac32}\sum_{k=1}^{p-1} {\rm sin}\left(\frac{\pi k j}{p}\right) f_{k,p}\left(-\frac{1}{\tau}\right),
\end{equation*}
correcting a sign-error in \cite{CMW}. From this one may conclude that
\begin{align*}
 F^*_{j,p}\left(\tau\right) -\frac{1}{\sqrt{-i\tau}} \sqrt{\frac{2}{p}}\sum_{k=1}^{p-1} \sin\left(\frac{\pi kj}{p}\right)F^*_{k,p}\left(-\frac{1}{\tau}\right)=i\sqrt{2p}\cdot r_{f_{j,p}}(\tau),
\end{align*}
where, for $f$ a holomorphic modular form of weight $k$,
\begin{equation}\label{rf}
r_f(\tau):=\int_0^{i\infty}\frac{f(w)}{(-i(w+\tau))^{2-k}}dw.
\end{equation}
The claim now follows since $r_{f_{j,p}}$ is real-analytic on $\R$.
\end{proof}
The next lemma writes the ``error to modularity'' as an Eichler integral.
Following the approach of Zwegers \cite{Zw1} and using trigonometric identities, one finds the following.
\begin{lemma}
	We have
\begin{equation*}
\begin{aligned}
-i\sqrt{2p}\cdot r_{f_{j,p}}(\tau)&=\int_{\R} \cot\left(\pi i w+\frac{\pi j}{2p}\right)e^{2\pi i p\tau w^2}dw
\\
&= \sin\left(\frac{\pi j}{p}\right) \frac12 \int_{\R} \frac{e^{2\pi i p\tau w^2}}{\sinh\left(\pi w+\frac{\pi ij}{2p}\right) \sinh\left(\pi w-\frac{\pi ij}{2p}\right)}dw.
\end{aligned}
\end{equation*}
\end{lemma}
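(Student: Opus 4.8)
The plan is to start from the Eichler integral definition
\[
r_{f_{j,p}}(\tau)=\int_0^{i\infty}\frac{f_{j,p}(w)}{\sqrt{-i(w+\tau)}}dw,
\]
insert the definition \eqref{3p} of $f_{j,p}$ as a weight $\frac32$ unary theta function, and interchange sum and integral. Each term $n\,q^{n^2/(4p)}$ produces, after the substitution $w\mapsto it$ and a Gaussian-type evaluation of $\int_0^{i\infty}(-i(w+\tau))^{-1/2}e^{\pi i n^2 w/(2p)}dw$, an expression involving the complementary error function (equivalently $\Gamma(\tfrac12,\cdot)$); summing over $n\equiv j\pmod{2p}$ and using $\sgn(n)$ symmetry should collapse this to the Mordell-type integral $\int_{\R}\cot(\pi i w+\tfrac{\pi j}{2p})e^{2\pi i p\tau w^2}dw$. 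Concretely, this is exactly the mechanism behind \eqref{errormod}: Zwegers' identity rewrites a unary theta Eichler integral as a Mordell integral, and here one wants the ``odd'' analogue where the numerator has the theta function rather than its antiderivative. I would follow that template, tracking the constant $-i\sqrt{2p}$ carefully through the normalizations in \eqref{3p} and \eqref{rf}.

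An alternative, and probably cleaner, route for the first equality is to verify it by matching asymptotic expansions at rationals together with a growth/modularity bound, since both sides are already known (from the sketch proof above) to be real-analytic on $\R$ and to be the error of modularity of the same pair of functions. But the direct computation is more self-contained, so I would do the term-by-term integral evaluation and simply record the trigonometric rearrangement: writing $n=2pm+j$ with $m\in\Z$, the two sign-contributions combine into a geometric-type series in $e^{-\pi w}$ whose sum is a ratio of hyperbolic functions, and the standard identity $\sum_{m\ge0}(e^{(2m+1)\pi i\theta})=\frac{1}{2}\,\mathrm{something}$ produces the cotangent. One should be mildly careful about the boundary term / principal value at $w=0$, since $\cot(\pi i w+\tfrac{\pi j}{2p})$ is smooth there for $1\le j\le p-1$ (the pole is avoided), so no regularization is actually needed.

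The second equality is a pure trigonometric identity: using $\cot(x)=\frac{\cos x}{\sin x}$ with $x=\pi i w+\tfrac{\pi j}{2p}$ and the product-to-sum formula, one writes
\[
\cot\!\left(\pi i w+\tfrac{\pi j}{2p}\right)
=\frac{\sin\!\left(\tfrac{\pi j}{p}\right)}{2}\cdot\frac{1}{\sinh\!\left(\pi w+\tfrac{\pi i j}{2p}\right)\sinh\!\left(\pi w-\tfrac{\pi i j}{2p}\right)}+(\text{odd part}),
\]
where the ``odd part'' integrates to zero against the even Gaussian weight $e^{2\pi i p\tau w^2}$. To see the decomposition, note $\sinh(\pi w+\tfrac{\pi i j}{2p})\sinh(\pi w-\tfrac{\pi i j}{2p})=\sinh^2(\pi w)+\sin^2(\tfrac{\pi j}{2p})=\tfrac12(\cosh(2\pi w)-\cos(\tfrac{\pi j}{p}))$, which is even in $w$, and a short manipulation expresses $\cot(\pi i w+\tfrac{\pi j}{2p})$ as (even function)$\times$(that reciprocal) plus an odd remainder; dropping the remainder under the symmetric integral finishes it.

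The main obstacle I anticipate is purely bookkeeping: getting the constant and the sign right in the first equality, because it requires threading the $\frac{1}{2p}$ in \eqref{3p}, the $\sqrt{-i(w+\tau)}$ normalization in \eqref{rf}, the substitution Jacobian, and the $\sgn(n)$-folding of the series all at once, while also confirming that interchanging the (conditionally convergent, after the sign folding) sum with the integral is legitimate — here one can either appeal to the absolute convergence obtained after pairing $n$ with $-n$, or deform the contour slightly into $\H$ to gain exponential decay before interchanging. The trigonometric steps themselves are routine once the hyperbolic product identity above is in hand.
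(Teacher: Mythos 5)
Your proposal is correct and is essentially the approach the paper itself (without giving details) attributes to Zwegers plus trigonometric identities: the termwise Gaussian evaluation of $r_{f_{j,p}}$ combined with the cotangent expansion $\pi\cot(\pi x)=\lim_{r\to\infty}\sum_{|m|\leq r}\frac{1}{x+m}$ at $x=iw+\frac{j}{2p}$ (so that the class $n=j+2pm$ appears, exactly as the paper does in two dimensions in the proof of Theorem \ref{theoremHigherMordell}) yields the first equality with the stated constant $-i\sqrt{2p}$, and your computation $\sinh\left(\pi w+\frac{\pi ij}{2p}\right)\sinh\left(\pi w-\frac{\pi ij}{2p}\right)=\frac12\left(\cosh(2\pi w)-\cos\left(\frac{\pi j}{p}\right)\right)$ together with discarding the odd part $\frac{-i\sinh(2\pi w)}{\cosh(2\pi w)-\cos(\pi j/p)}$ is precisely the second equality. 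One small correction to your convergence remark: the symmetric summation that produces absolute convergence pairs $m$ with $-m$ inside the fixed residue class $n=j+2pm$ (pairing $n$ with $-n$ would leave the class $j\pmod{2p}$), and for $1\leq j\leq p-1$ there is indeed no $n=0$ term nor a pole at $w=0$, so no regularization is needed, as you note.
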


\section{Previous results in the two-dimensional case}

In this section, we recall the results from \cite{BKM}. In that paper the following decomposition was shown
\begin{equation*}
F(q)=\frac2pF_1\left(q^p\right)+2F_2\left(q^p\right)
\end{equation*}
with
\begin{align}\label{defineF1}
F_1(q):=\sum_{\boldsymbol{\alpha}\in\mathscr S}\varepsilon(\boldsymbol{\alpha})\sum_{\boldsymbol{n}\in\boldsymbol{\alpha} +\N_0^2}q^{Q(\boldsymbol{n})}+\frac12 \sum_{m\in\Z}\sgn\left(m+\frac1p\right)q^{\left(m+\frac1p\right)^2},
\end{align}
where 
\begin{align*}
\mathscr{S}:=&\left\{\left(1-\frac{1}{p},\frac{2}{p}\right),\left(\frac{1}{p},1-\frac{2}{p}\right),\left(1,\frac{1}{p}\right)\left(0,1-\frac{1}{p}\right),\left(\frac{1}{p},1-\frac{1}{p}\right),\left(1-\frac{1}{p},\frac{1}{p}\right)\right\},
\end{align*}
and for $\boldsymbol{\alpha}\pmod{\Z^2}$, we set
\begin{align*}
\varepsilon(\boldsymbol{\alpha}):=&\begin{cases} -2 \qquad&\text{if } \boldsymbol{\alpha} \in\left\{\left(1-\frac{1}{p},\frac{2}{p}\right),\left(\frac{1}{p},1-\frac{2}{p}\right)\right\},\\ 1 &\text{otherwise}.\end{cases}
\end{align*}
Moreover
\begin{align}\label{defineF2}
F_2(q)&:=\sum_{\boldsymbol{\alpha}\in\mathscr{S}}\eta(\boldsymbol{\alpha})\sum_{\boldsymbol{n}\in\boldsymbol{\alpha}+\N_0^2} n_2 q^{Q(\boldsymbol{n})}-\frac12\sum_{m\in\Z}\left|m+\frac1p\right|q^{\left(m+\frac1p\right)^2},
\end{align}
where for $\boldsymbol{\alpha}\pmod{\Z^2}$, we let
\begin{align*}
\eta(\boldsymbol{\alpha})&:=
\begin{cases}
1&\quad\text{ if } \boldsymbol{\alpha} \in\left\{\left(1-\frac1p, \frac2p\right), \left(0, 1-\frac1p\right), \left(\frac1p, 1-\frac1p\right)\right\},\\
-1&\quad\text{ otherwise.}
\end{cases}
\end{align*}
In \cite{BKM} the following theorem was shown. 
\begin{theorem}
	For $p\geq 2$, the functions 
	$F_1$ and $F_2$ are quantum modular forms of depth two with quantum set $\Q$ and of weight one and weight two, respectively. 
\end{theorem}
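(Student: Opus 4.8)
The plan is to prove Theorem \ref{maintheorem1} by combining the vector-valued theta transformation formulas (to be developed in Section 5, based on \eqref{Thetainv}) with the asymptotic matching principle already used in the one-dimensional case. First I would express $F_1$ and $F_2$, up to the elementary unary correction terms appearing in \eqref{defineF1} and \eqref{defineF2}, in terms of the rank-two Eichler-type double integrals $H_{1,\boldsymbol{\alpha}}$ and $H_{2,\boldsymbol{\alpha}}$; these are the companions whose asymptotic expansions at rationals agree (to all orders) with those of $F_1$, $F_2$, exactly as $F^*_{j,p}$ matches $F_{j,p}$ in Section 3. This reduces the theorem to establishing the depth-two vector-valued modular transformation behavior of the family $\{H_{1,\boldsymbol{\alpha}}\}$ and $\{H_{2,\boldsymbol{\alpha}}\}$ as $\boldsymbol{\alpha}$ ranges over (a set closed under the relevant action containing) $\mathscr{S}$.

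The heart of the argument is the $S$-transformation, since $T$-invariance is essentially immediate from $q$-expansions and the $\mathscr{S}$-labelling. Here I would proceed in two stages. For the inner integration variable, applying \eqref{Thetainv} to the weight $\frac32$ (respectively weight $\frac12$) theta pieces $\theta_i(\boldsymbol{\alpha};\boldsymbol{w})$ converts the single inner integral into a linear combination, over a finite index set of new characteristics, of integrals with $-1/w$ in place of $w$; after the change of variables $w_2\mapsto -1/w_2$ the inner integral becomes an Eichler integral from $w_1$ to $i\infty$ again, but now with an extra automorphy factor depending on $w_1$ and $\tau$. Repeating the same device on the outer variable $w_1$, and carefully tracking the $(-i(w_j+\tau))^{-s_j}$ kernels through the substitutions $w_j\mapsto -1/w_j$ and $\tau\mapsto -1/\tau$, produces a term of the form $(c\tau+d)^{-k}\sum_{\ell}\chi_{j,\ell}(S)(\text{same family at }-1/\tau)$ plus \emph{boundary terms}: these are the finitely many integrals that arise when one of the two iterated integrations degenerates (lower limit $0$ instead of $w_1$, or a single surviving integration). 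Each such boundary term is itself a one-dimensional Eichler integral against a theta function, times a second Eichler integral or an elementary factor, hence lies in $\mathcal{Q}_\kappa(\chi)\mathcal{O}(R)+\mathcal{O}(R)$, which is precisely the depth-two condition of Definition \ref{defgen}. The weights come out as $1$ for $F_1$ (two kernels of type $(-i(w_j+\tau))^{-\frac12}$, total homogeneity degree $1$) and $2$ for $F_2$ (one kernel of each of the two types).

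The main obstacle I anticipate is bookkeeping rather than conceptual: one must check that the set $\mathscr{S}$ together with the characteristics produced by \eqref{Thetainv} and by the symmetries \eqref{Thetneg} closes up into a finite family on which a genuine matrix multiplier $\boldsymbol{\chi}$ acts, and that the many boundary terms are correctly collected, sign-tracked, and identified as elements of the stated spaces $\sum_m \mathcal{Q}^{1}_{\kappa_m}(\chi_m)\mathcal{O}(R)$. Subtleties will include the behavior of the ``diagonal'' pieces where $\boldsymbol{\alpha}$ has an integral component (handled by the split definitions of $g_{1,\boldsymbol{\alpha}}$, $g_{2,\boldsymbol{\alpha}}$ and by \eqref{M2E2}, which regularizes $M_2$ at the degenerate loci), the convergence of the double integrals near the corners of the integration region, and matching normalizations so that the unary correction terms $\frac12\sum_m\sgn(m+\frac1p)q^{(m+\frac1p)^2}$ and $-\frac12\sum_m|m+\frac1p|q^{(m+\frac1p)^2}$ — which are themselves depth-one quantum modular forms of weights $\frac12$ and $\frac32$ by the Section 3 results — contribute consistently to the depth-two error. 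Once these are in place, invoking the $S$- and $T$-transformations and the fact that they generate $\SL_2(\Z)$ yields the full statement.
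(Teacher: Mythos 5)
Your two-step strategy (find a double Eichler-integral companion that matches $F_1$, $F_2$ asymptotically at every rational, then obtain its depth-two transformation by applying \eqref{Thetainv} in each integration variable and splitting the path of integration) is the paper's strategy; the transformation part you describe is essentially Lemma \ref{Itra} combined with the transformations quoted in the sketch of Section 4. However, there is a genuine error at your reduction step: you declare $H_{1,\boldsymbol{\alpha}}$ and $H_{2,\boldsymbol{\alpha}}$, whose double integrals have lower limit $0$, to be the companions whose asymptotic expansions at rationals agree to all orders with those of $F_1$ and $F_2$. This is false and cannot be repaired: $H_{j,\boldsymbol{\alpha}}$ extends real-analytically to (an open subset of) $\R$ --- by Theorems \ref{theoremHigherMordell} and \ref{diffTh} it is even a convergent Mordell-type integral in $\tau$ --- whereas the radial limits $\lim_{t\to 0^+}F_1\bigl(e^{2\pi i\frac hk - t}\bigr)$ depend on $\frac hk$ in the non-analytic way characteristic of quantum modular forms, so no real-analytic function of $\tau$ can reproduce their expansions. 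The correct companions are the non-holomorphic double Eichler integrals with lower limit $-\overline{\tau}$, namely $\mathbb{E}_1$ and $\mathbb{E}_2$ of \eqref{Ep1} and \eqref{Ep2}, and the asymptotic agreement \eqref{AsE1}, \eqref{AsE2} is a substantive input established in \cite{BKM} by Euler--Maclaurin summation, not something that follows formally from the analogy with Section 3. The functions $H_{j,\boldsymbol{\alpha}}$ play the opposite role in the argument: they are (essentially) the error of modularity, cf.\ Corollary \ref{propE1}.

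Two smaller corrections once the right companions are in place. After the path splitting of $\int_{-\overline{\tau}}^{0}\int_{w_1}^{0}$, one of the three extra terms is the genuinely two-dimensional integral $\int_0^{i\infty}\int_{w_1}^{i\infty}$ (essentially $H_{j,\boldsymbol{\alpha}}$ itself), which is \emph{not} a product of one-dimensional Eichler integrals; it goes into $\mathcal{O}(R)$ directly, while the products $I_{f}(\tau)\,r_{g}(\tau)$ and $r_{f}(\tau)\,r_{g}(\tau)$ supply the $\mathcal{Q}_{\kappa}(\chi)\mathcal{O}(R)$ part of Definition \ref{defgen}. Also, the unary pieces in \eqref{defineF1} and \eqref{defineF2} need no separate weight-$\frac12$ or weight-$\frac32$ treatment: the asymptotic statements \eqref{AsE1} and \eqref{AsE2} concern $F_1$ and $F_2$ as a whole, so the companions already account for these terms and the weights remain one and two as claimed.
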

\begin{proof}[Sketch of proof]
Using the Euler-Maclaurin summation formula, it was shown in \cite{BKM} that the higher rank false theta functions asymptotically equal double Eichler integrals. To be more precise, write
\[
F_1\left(e^{2\pi i\frac{h}{k}-t}\right)\sim\sum_{m\geq 0} A_{h, k}(m) t^m\quad \left(t\to 0^+\right).
\]
In \cite{BKM}, we proved that we have, for $h,k\in\Z$ with $k>0$ and $\gcd(h,k)=1$,
\begin{equation}\label{AsE1}
\mathbb{E}_1\left(\frac{it}{2\pi}-\frac{h}{k}\right){\sim } \sum_{m\geq 0} A_{h, k}(m) (-t)^m\quad \left(t\to 0^+\right).
\end{equation}
Here the double Eichler integral $\mathbb{E}_1$ is given as follows: Define for $\boldsymbol{\alpha} \in \mathscr{S}^\ast:=\{(1-\frac1p,\frac2p),$\\$(0,1-\frac1p),(\frac1p,1-\frac1p)\}$
\begin{align*}
\mathcal E_{1,\boldsymbol{\alpha}}(\tau):=-\frac{\sqrt{3}}{4}
\int_{-\overline{\tau}}^{i\infty}
\int_{w_1}^{i\infty}
\frac{\theta_1(\boldsymbol{\alpha};\boldsymbol{w})+\theta_2(\boldsymbol{\alpha};\boldsymbol{w})}{\sqrt{-i(w_1+\tau)}\sqrt{-i(w_2+\tau)}}
dw_2 dw_1
\end{align*}
with
\begin{align}\label{thet1}
\theta_1(\boldsymbol{\alpha};\boldsymbol{w})&:=\sum_{n\in\alpha+\Z^2}(2n_1+n_2)n_2
e^{\frac{3\pi i}{2}(2n_1+n_2)^2w_1
	+ \frac{\pi in_2^2w_2}{2}},
\\
\label{thet2}
\theta_2(\boldsymbol{\alpha};\boldsymbol{w})&:=\sum_{\boldsymbol{n}\in \boldsymbol{\alpha}+\Z^2}(3n_1+2n_2)n_1
e^{\frac{\pi i}{2}(3n_1+2n_2)^2w_1
	+ \frac{3\pi in_1^2w_2}{2}}.
\end{align}
Then set
\begin{align}\label{Ep1}
\mathcal{E}_1(\tau)&:= \sum_{\boldsymbol{\alpha}\in \mathscr{S}^\ast}\varepsilon(\boldsymbol{\alpha})\mathcal{E}_{1,\boldsymbol{\alpha}}(p\tau),\qquad \mathbb E_1(\tau):=\mathcal E_1\left(\frac{\tau}{p}\right).
\end{align}
The double Eichler integral $\mathcal E_1$ satisfies modular transformation properties. To be more precise, we have, for  $M=\left(\begin{smallmatrix}a&b\\c&d\end{smallmatrix}\right)\in\Gamma_p$ (some congruence subgroup of $\text{SL}_2(\mathbb Z)$),
	$$
	\mathcal E_1(\tau) - \left(\frac{-3}{d}\right) (c\tau+d)^{-1}\mathcal E_1(M\tau) = \sum_{j=1}^{2} \left(r_{f_j,g_j,\frac{d}{c}}(\tau)+I_{f_j}(\tau) r_{g_j,\frac{d}{c}}(\tau)\right),
	$$
	where $\left(\frac{\,\cdot\,}{\,\cdot\,}\right)$ is the extended Jacobi symbol, $f_j,g_j$ are cusp forms of weight $\frac32$ (with some multiplier), and for holomorphic modular forms $f_1$ and $f_2$ of weights $\kappa_1$ and $\kappa_2$, respectively, we set
	\begin{align*}
	r_{f_1,f_2,\frac{d}{c}}(\tau) &:= \int_{\frac{d}{c}}^{i\infty} \int_{w_1}^{\frac{d}{c}} \frac{f_1(w_1) f_2(w_2)}{(-i(w_1+\tau))^{2-\kappa_1} (-i(w_2+\tau))^{2-\kappa_2}} dw_2 dw_1,\\
	%\label{double-eichler}
	r_{f_1, \frac dc}(\tau)&:=\int_{\frac dc}^{i\infty}\frac{f_1(w)}{\left(-i(w+\tau)\right)^{2-\kappa_1}}dw.
	\end{align*}

The situation is similar for $F_2$. To be more precise, writing
$$
F_2\left(e^{2\pi i\frac{h}{k}-t}\right)\sim \sum_{m\geq 0} B_{h,k}(m) t^m \qquad \left(t\to 0^+\right),
$$
we proved in \cite{BKM} that we have, for $h,k\in\Z$ with $k>0$ and $\gcd(h,k)=1$,
\begin{equation}\label{AsE2}
\mathbb E_2\left(\frac{it}{2\pi}-\frac{h}{k}\right) \sim  \sum_{m\geq 0} B_{h,k}(m) (-t)^{m}\qquad \left(t\to 0^+\right).
\end{equation}
Here the Eichler integral $\mathbb E_2$ is given as follows: Define for $\boldsymbol{\alpha}\in\mathscr S^\ast$
\begin{align*}
\mathcal E_{2,\boldsymbol{\alpha}}(\tau):=&\frac{\sqrt{3}}{8\pi} \int_{-\overline{\tau}}^{i\infty} \int_{w_1}^{i\infty} \frac{2\theta_3(\boldsymbol{\alpha}; \boldsymbol{w}) -\theta_4(\boldsymbol{\alpha}; \boldsymbol{w})}{\sqrt{-i(w_1+\tau)}(-i(w_2+\tau))^{\frac32}}dw_2dw_1 \\
&+\frac{\sqrt{3}}{8\pi} \int_{-\overline{\tau}}^{i\infty} \int_{w_1}^{i\infty} \frac{\theta_5(\boldsymbol{\alpha};\boldsymbol{w})}{(-i(w_1+\tau))^{\frac32}\sqrt{-i(w_2+\tau)}} dw_2 dw_1
\end{align*}
with
\begin{align}\label{thet3}
\theta_3(\boldsymbol{\alpha}; \boldsymbol{w})&:= \sum_{n\in\boldsymbol{\alpha}+\Z^2}(2n_1+n_2) e^{\frac{3\pi i}{2}(2n_1+n_2)^2w_1+\frac{\pi i n_2^2 w_2}{2}},\\
\label{thet4}
\theta_4(\boldsymbol{\alpha}; \boldsymbol{w})&:= \sum_{\boldsymbol{n}\in\boldsymbol{\alpha}+\Z^2}(3n_1+2n_2) e^{\frac{\pi i}{2}(3n_1+2n_2)^2w_1+\frac{3\pi i n_1^2 w_2}{2}},\\
\label{thet5}
\theta_5(\boldsymbol{\alpha}; \boldsymbol{w})&:= \sum_{\boldsymbol{n}\in\boldsymbol{\alpha}+\Z^2}n_1 e^{\frac{\pi i}{2}(3n_1+2n_2)^2w_1+\frac{3\pi i n_1^2 w_2}{2}}.
\end{align}
We then set
\begin{equation}\label{Ep2}
\mathcal E_2(\tau) := \sum_{\boldsymbol{\alpha}\in\mathscr S^\ast}\mathcal E_{2,\boldsymbol{\alpha}}(p\tau),\qquad \mathbb E_2(\tau) := \mathcal E_2\left(\frac{\tau}{p}\right).
\end{equation}
Again one can show transformations for $\mathcal E_2$. Namely
	for $M\in\Gamma_p$, one has
	$$
	\mathcal E_2(\tau) - \left(\frac{3}{d}\right) (c\tau+d)^{-2} \mathcal E_2(M\tau) = \sum_{j=1}^{4} \left(r_{f_j,g_j,\frac{d}{c}}(\tau)+I_{f_j}(\tau)r_{g_j,\frac{d}{c}}(\tau)\right),
	$$
	with $f_j$ and $g_j$  holomorphic modular forms of weight $\frac12$ or cusp forms of weight $\frac32$, respectively.
\end{proof}
\section{Higher depth Vector-valued transformations}

\subsection{General double Eichler integrals}
We first describe the general situation. For this assume that $f_{j}, g_{\ell}$ $(1\leq j\leq N,\ 1\leq \ell\leq M)$ are components of vector-valued modular forms and in particular transform as (with $\kappa_1,\kappa_2\in\frac12+\N_0$)  
\begin{equation}\label{fgtran}
f_j\left(-\frac1\tau\right) = (-i\tau)^{\kappa_1} \sum_{1\le k\le N} \chi_{j,k}\: f_k(\tau),
\quad
g_\ell\left(-\frac1\tau\right)= (-i\tau)^{\kappa_2} \sum_{1\le m \le M} \psi_{\ell,m}\: g_m(\tau).
\end{equation}
Following \cite{BKM}, define the {\it double Eichler integral}
\begin{align*}
I_{f_j,g_\ell}(\tau):=&\ \int_{-\overline{\tau}}^{i\infty}\int_{w_1}^{i\infty} \frac{f_j(w_1)g_\ell(w_2)}{(-i(w_1+\tau))^{2-\kappa_1}(-i(w_2+\tau))^{2-\kappa_2}}dw_2dw_1.
\end{align*}

We prove the following transformation.

\begin{lemma}\label{Itra}
We have the following two transformations
\begin{align}\label{Tshift}
&I_{f_j,g_\ell}(\tau)-I_{f_j|T,g_\ell|T}(\tau+1)=0,\\
%I_{f_j,g_\ell}\left(\tau+1 \right)=\int_{-\overline{\tau}}^{i\infty}\int_{w_1}^{i\infty} \frac{f_j(w_1-1)g_\ell(w_2-1)}{(-i(w_1+\tau))^{2-\kappa_1}(-i(w_2+\tau))^{2-\kappa_2}}dw_2dw_1,
%{\bf KB: I think the first one we should write as 
%	\[
%	I_{f_j,g_\ell}(\tau+1)-I_{f_j|_{k_1}T^{-1},g_\ell|_{k_2}T^{-1}}=0.
%	\]}
	\label{Stran}
	&I_{f_j,g_\ell}(\tau)-(-i\tau)^{\kappa_1+\kappa_2-4}\sum_{\substack{1\leq k\leq N\\ 1\leq m\leq M}}\chi_{j,k}\psi_{\ell,m}I_{f_k,g_m}\left(-\frac{1}{\tau}\right)\\\notag 
	&\qquad=\int_0^{i\infty}\int_{w_1}^{i\infty}\frac{f_j(w_1)g_{\ell}(w_2)}{(-i(w_1+\tau))^{2-\kappa_1}(-i(w_2+\tau))^{2-\kappa_2}}dw_2dw_1+I_{f_j}(\tau)r_{g_\ell}(\tau)-r_{f_j}(\tau) r_{g_\ell}(\tau),
	\end{align}
	where $|_{\kappa}$ denotes the usual weight $k$ slash operator.
\end{lemma}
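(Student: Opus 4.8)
The plan is to establish the two transformation formulas in Lemma \ref{Itra} directly by manipulating the defining double integral, using the modularity \eqref{fgtran} of the inner forms together with an explicit change of variables on the contour.

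\textbf{The $T$-transformation.} First I would treat \eqref{Tshift}, which should be essentially formal. Starting from the definition of $I_{f_j,g_\ell}(\tau)$, I replace $\tau$ by $\tau+1$ and the forms by $f_j|T$, $g_\ell|T$. Since $\kappa_1,\kappa_2\in\frac12+\N_0$, the slash operator here is just $f_j|T(w)=f_j(w+1)$ (no automorphy factor since $c=0,d=1$), and the weight factors $(-i(w_j+\tau))^{2-\kappa_j}$ are unchanged when one shifts $w_j\mapsto w_j-1$ simultaneously with $\tau\mapsto\tau+1$. Shifting both inner integration variables $w_1\mapsto w_1-1$, $w_2\mapsto w_2-1$ and noting that $-\overline{\tau+1}=-\overline\tau-1$ maps the lower endpoint correctly and that the nested upper limit $w_1^{i\infty}$ is shift-covariant, the two integrals coincide termwise. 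So \eqref{Tshift} follows immediately.

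\textbf{The $S$-transformation.} This is the substantive part. I start from $I_{f_j,g_\ell}(\tau)$ and substitute $w_i\mapsto -1/w_i$ in both variables, so $dw_i \mapsto dw_i/w_i^2$ and $f_j(-1/w_1)g_\ell(-1/w_2)$ becomes, via \eqref{fgtran}, $(-iw_1)^{\kappa_1}(-iw_2)^{\kappa_2}\sum_{k,m}\chi_{j,k}\psi_{\ell,m}f_k(w_1)g_m(w_2)$. The contours $-\overline\tau\to i\infty$ and $w_1\to i\infty$ transform under $w\mapsto -1/w$; the key geometric point is that the image of the region $\{-\overline\tau<w_1<i\infty,\ w_1<w_2<i\infty\}$ under this involution, after also using $(-i(-1/w_j+\tau)) = (-i/w_j)(w_j+\tau^{-1})\cdot(\text{something})$ — more precisely one checks $(-i(-\tfrac1w+\tau)) = \frac{-i}{w}(w - \tau w + \dots)$, so I would carefully record $(-i(-1/w_j+\tau))^{2-\kappa_j} = (-iw_j)^{-(2-\kappa_j)}\big({-i(w_j-\tfrac1\tau)}\big)^{2-\kappa_j}(-i\tau)^{?}$ up to the exact powers — yields, after collecting all powers of $w_j$ (the $w_j^{-2}$ Jacobian, the $(-iw_j)^{\kappa_j}$ from modularity, and the $(-iw_j)^{2-\kappa_j}$ from the weight factors all cancel), the double Eichler integral $I_{f_k,g_m}(-1/\tau)$ with an overall factor $(-i\tau)^{\kappa_1+\kappa_2-4}$, but integrated over the \emph{transformed region}, which is not quite the standard one. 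The difference between the transformed region and the region $\{-\overline{(-1/\tau)}<w_1<i\infty,\ w_1<w_2<i\infty\}$ defining $I_{f_k,g_m}(-1/\tau)$ is a union of ``rectangular'' pieces of the form $\{0<w_1<i\infty\}\times\{w_1<w_2<i\infty\}$ and product pieces, which account exactly for the three terms on the right-hand side: the genuinely new double integral $\int_0^{i\infty}\int_{w_1}^{i\infty}$, the mixed term $I_{f_j}(\tau)r_{g_\ell}(\tau)$ (where one variable runs over a single Eichler integral and the other over an $r$-integral from $0$ to $i\infty$), and the correction $-r_{f_j}(\tau)r_{g_\ell}(\tau)$ coming from the overlap/over-counting of the two one-dimensional pieces. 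This region-splitting bookkeeping, done already in \cite{BKM} for the scalar case, carries over verbatim since the multiplier sums factor out of every contour manipulation.

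\textbf{Main obstacle.} The hard part is not any single estimate but the careful tracking of the contour decomposition in the $S$-transformation: one must justify splitting the nested double contour (which has the triangular constraint $w_1<w_2$) into the four pieces, verify that the boundary contributions along $\R_{>0}$, along the line through $-1/\tau$, and at the ``corner'' combine with the correct signs, and confirm convergence of each piece (the forms $f_j,g_\ell$ are cusp forms of weight $\frac32$ or holomorphic of weight $\frac12$, so the integrals converge at $i\infty$; near the real endpoints one uses that $-\overline\tau$ and $0$ lie in $\H$ or on its boundary appropriately). Since the analogous scalar identity is established in \cite{BKM}, I would phrase the proof as: apply that argument coefficient-wise, with the multipliers $\chi_{j,k},\psi_{\ell,m}$ inert throughout, and only spell out the steps where the vector-valued structure could in principle interfere (it does not, because \eqref{fgtran} is linear and the contour surgery is independent of which component one integrates).
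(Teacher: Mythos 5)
Your proposal is correct and takes essentially the same route as the paper: the $T$-shift is handled by the trivial change of variables, and the $S$-transformation by applying $w\mapsto -1/w$ together with \eqref{fgtran}, which identifies the multiplier-weighted sum with $\int_{-\overline\tau}^{0}\int_{w_1}^{0}$ of the original integrand, after which the paper's contour splitting $\int_{-\overline\tau}^{0}\int_{w_1}^{0}=\int_{-\overline\tau}^{i\infty}\int_{w_1}^{i\infty}+\int_0^{i\infty}\int_0^{i\infty}-\int_0^{i\infty}\int_{w_1}^{i\infty}-\int_{-\overline\tau}^{i\infty}\int_0^{i\infty}$ is exactly your region bookkeeping, with the same identification of the three error terms and signs. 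The only detail you left open, the exact automorphy factor written as $(-i\tau)^{?}$, does work out to the claimed $(-i\tau)^{\kappa_1+\kappa_2-4}$ since the powers of $-iw_j$ from the Jacobian, the modular transformation, and the weight factors cancel as you indicated.
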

\begin{proof}
The transformation \eqref{Tshift} is clear. To show \eqref{Stran}, we first compute, using \eqref{fgtran},
\begin{equation*}
(-i\tau)^{\kappa_1+\kappa_2-4}\sum_{\substack{1\leq k\leq N\\1\leq m\leq M}}\chi_{j,k}\psi_{\ell,m}I_{f_k,g_m}\left(-\frac{1}{\tau}\right) = \int_{-\overline{\tau}}^{0}\int_{w_1}^0 \frac{f_j(w_1)g_\ell(w_2)}{(-i(w_1+\tau))^{2-\kappa_1}(-i(w_2+\tau))^{2-\kappa_2}}dw_2dw_1.
\end{equation*}
Employing the splitting 
\begin{equation*}
\int_{-\overline{\tau}}^{0} \int_{w_1}^0 = \int_{-\overline{\tau}}^{i\infty} \int_{w_1}^{i\infty} + \int_0^{i\infty} \int_0^{i\infty}-\int_0^{i\infty}\int_{w_1}^{i\infty} - \int_{-\overline{\tau}}^{i\infty} \int_0^{i\infty}
\end{equation*}
then directly gives the claim.
\end{proof}

\subsection{The function $\mathcal E_1$}
We first rewrite $\mathcal E_1$. For this define, for $k_1,k_2\in\Z$ with $k_1\equiv k_2\pmod{2}$,  
\begin{align*}
J_{\boldsymbol{k}}(\tau)&:=\sum_{\delta\in\{0,1\}}I_{(k_1+\delta p,k_2+3\delta p)}(\tau)\quad\text{with}\quad
I_{\boldsymbol{k}}(\tau) := -\frac{\sqrt{3}}{4p}I_{\Theta_1(2p,k_1,2p;\cdot),\Theta_1(6p,k_2,6p,\cdot)}(\tau),\\
r_{\boldsymbol{k}}(\tau)&:=\int_0^{i\infty}\int_{w_1}^{i\infty}\frac{\Theta_1(2p,k_1,2p;w_1)\Theta_1(6p,k_2,6p;w_2)}{\sqrt{-i(w_1+\tau)}\sqrt{-i(w_2+\tau)}}dw_2dw_1.
\end{align*}
We have the following transformation properties.
\begin{proposition}\label{propJ}
	We have, for $\ell_1\equiv \ell_2\pmod{2}$, 
	\begin{align*}
	J_{\boldsymbol{\ell}}(\tau) &= -\frac{1}{\sqrt{3}p(-i\tau)}\sum_{\substack{k_1\pmod{p} \\ k_2\pmod{6p} \\ k_1\equiv k_2\pmod{2}}} \zeta_{2p}^{k_1\ell_1}\zeta_{6p}^{k_2\ell_2} J_{\boldsymbol{k}}\left(-\frac{1}{\tau}\right)- \frac{\sqrt{3}}{4p} \sum_{\delta\in\{0,1\}} r_{(k_1+p\delta,k_2+3p\delta)}(\tau)\\
	&\quad -\frac{\sqrt{3}}{4p}\sum_{\delta\in\{0,1\}}\Bigg( I_{\Theta_1(2p,\ell_1+p\delta,2p;\,\cdot\,)}(\tau)-r_{\Theta_1(2p,\ell_1+p\delta,2p;\,\cdot\,)}(\tau)\Bigg)r_{\Theta_1(6p,\ell_2+3p\delta,6p;\,\cdot\,)}(\tau),
	\end{align*}
	where $\zeta_j:=e^{\frac{2\pi i}{j}}$.
	\end{proposition}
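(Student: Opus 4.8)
The statement is an $S$-transformation formula for the rank-two Eichler integral $J_{\boldsymbol{\ell}}$, so the natural route is to reduce it to Lemma \ref{Itra}, applied to the pair of unary theta functions $\Theta_1(2p,k_1,2p;\cdot)$ and $\Theta_1(6p,k_2,6p;\cdot)$, and then reorganize the resulting sum over residues using the Shimura theta inversion \eqref{Thetainv}. The first step is to record the $S$-transformation of each factor. Applying \eqref{Thetainv} with $\nu=1$ to $A=2p$, $N=2p$ (resp.\ $A=6p$, $N=6p$) gives
\begin{align*}
\Theta_1(2p,\ell_1,2p;\tau) &= (-i)(-i\tau)^{-\frac32}(2p)^{-\frac12}\sum_{\substack{k_1\!\!\!\pmod{2p}}} e\!\left(\tfrac{k_1\ell_1}{2p}\right)\Theta_1\!\left(2p,k_1,2p;-\tfrac1\tau\right),\\
\Theta_1(6p,\ell_2,6p;\tau) &= (-i)(-i\tau)^{-\frac32}(6p)^{-\frac12}\sum_{\substack{k_2\!\!\!\pmod{6p}}} e\!\left(\tfrac{k_2\ell_2}{6p}\right)\Theta_1\!\left(6p,k_2,6p;-\tfrac1\tau\right),
\end{align*}
noting that the congruence conditions $2p\cdot k\equiv 0\pmod{2p}$ and $6p\cdot k\equiv 0\pmod{6p}$ are automatic, so the sums are over full residue systems. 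In the notation of \eqref{fgtran} this means $\kappa_1=\kappa_2=\frac32$ and the multiplier matrices are $\chi_{\ell_1,k_1}=(-i)(2p)^{-\frac12}\zeta_{2p}^{k_1\ell_1}$, $\psi_{\ell_2,k_2}=(-i)(6p)^{-\frac12}\zeta_{6p}^{k_2\ell_2}$, so that $\chi\psi$ carries the prefactor $(-i)^2(2p)^{-\frac12}(6p)^{-\frac12}=-\frac{1}{2p\sqrt{3}}$ and the Dedekind-style weight factor $(-i\tau)^{\kappa_1+\kappa_2-4}=(-i\tau)^{-1}$.

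Next I would feed this into \eqref{Stran}. For a single $\delta\in\{0,1\}$, writing out $I_{(k_1+\delta p,\,k_2+3\delta p)}(\tau)=-\frac{\sqrt3}{4p}I_{\Theta_1(2p,k_1+\delta p,2p;\cdot),\,\Theta_1(6p,k_2+3\delta p,6p;\cdot)}(\tau)$ and applying \eqref{Stran} produces
\[
I_{(k_1+\delta p,k_2+3\delta p)}(\tau) - (-i\tau)^{-1}\!\!\sum_{\substack{a\!\!\!\pmod{2p}\\ b\!\!\!\pmod{6p}}}\!\!\left(-\tfrac{1}{2p\sqrt3}\right)\zeta_{2p}^{a(\ell_1+\delta p)}\zeta_{6p}^{b(\ell_2+3\delta p)}\,I_{(a,b)}\!\left(-\tfrac1\tau\right)
\]
equal to the three ``error'' terms: the truncated double integral $-\frac{\sqrt3}{4p}\int_0^{i\infty}\!\int_{w_1}^{i\infty}\!\cdots$, which is exactly $-\frac{\sqrt3}{4p}r_{(k_1+\delta p,k_2+3\delta p)}(\tau)$ in the proposition's notation, plus $I_{\Theta_1(2p,\ell_1+\delta p,2p;\cdot)}(\tau)\,r_{\Theta_1(6p,\ell_2+3\delta p,6p;\cdot)}(\tau)$ (after pulling out $-\frac{\sqrt3}{4p}$), minus $r_{\Theta_1(2p,\ell_1+\delta p,2p;\cdot)}(\tau)\,r_{\Theta_1(6p,\ell_2+3\delta p,6p;\cdot)}(\tau)$ times the same constant — the last two combining into the final line of the claim. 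Summing over $\delta\in\{0,1\}$ reproduces the error terms on the right-hand side of the proposition; what remains is to massage the $S$-term into the stated shape, which is
\[
-\frac{1}{\sqrt3\,p(-i\tau)}\sum_{\substack{k_1\!\!\!\pmod p,\ k_2\!\!\!\pmod{6p}\\ k_1\equiv k_2\!\!\!\pmod 2}}\zeta_{2p}^{k_1\ell_1}\zeta_{6p}^{k_2\ell_2}J_{\boldsymbol k}\!\left(-\tfrac1\tau\right).
\]

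\textbf{The main obstacle} is precisely this bookkeeping step: I must show that summing the two inner theta-inversion sums (over $a\pmod{2p}$ and $b\pmod{6p}$, for each $\delta$) collapses into a single sum over $k_1\pmod p$, $k_2\pmod{6p}$ with the parity constraint $k_1\equiv k_2\pmod 2$ and the $J_{\boldsymbol k}$-grouping. The mechanism is: the $\delta$-dependent phases $\zeta_{2p}^{a\cdot\delta p}=(-1)^{a\delta}$ and $\zeta_{6p}^{b\cdot 3\delta p}=(-1)^{b\delta}=e^{\pi i\delta(a+b)}$ combine, so that $\sum_{\delta\in\{0,1\}}$ acts as a detector of the parity $a\equiv b\pmod 2$ — it gives $2$ when $a\equiv b\pmod 2$ and $0$ otherwise — which explains the congruence condition on $(k_1,k_2)$ in the target sum. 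Simultaneously, the residues must be regrouped: $a\pmod{2p}$ splits as $a=k_1+\delta' p$ with $k_1\pmod p$ and $\delta'\in\{0,1\}$, and the $\delta'$-summation is exactly what assembles $I_{(k_1,k_2)}+I_{(k_1+p,k_2+3p)}=J_{\boldsymbol k}$ (after checking, via the periodicity relations \eqref{Thetneg}, that shifting $b\mapsto b+3p$ when $a\mapsto a+p$ is the consistent pairing forced by the parity constraint). One then has to verify that the leftover constant is $-\tfrac{1}{2p\sqrt3}\cdot 2 = -\tfrac1{p\sqrt3}$, matching $-\tfrac{1}{\sqrt3 p}$ in the statement. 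I expect this to be a careful but entirely mechanical reindexing; the only genuine subtlety is keeping the parity constraint consistent between the two moduli, for which \eqref{Thetneg} (especially the relation $\Theta_\nu(A,N-h,2N;\tau)=(-1)^\nu\Theta_\nu(A,N+h,2N;\tau)$) is the needed tool.
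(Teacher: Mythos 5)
Your proposal follows essentially the same route as the paper's proof: apply the Shimura inversion \eqref{Thetainv} to both theta factors, feed the resulting multipliers (with $\kappa_1=\kappa_2=\tfrac32$ and prefactor $-\tfrac{1}{2p\sqrt{3}}$) into Lemma \ref{Itra}, identify the three error terms, and then use the $\delta$-sum as a parity detector together with the reindexing $k_1\mapsto k_1+p\delta$, $k_2\mapsto k_2+3p\delta$ to assemble the $J_{\boldsymbol{k}}$'s and the constant $-\tfrac{1}{\sqrt{3}p}$, exactly as in the paper. The only minor inaccuracy is in the last step: what is needed there is just the periodicity in \eqref{Thetneg} and the hypothesis $\ell_1\equiv\ell_2\pmod{2}$, which makes the $\delta$-dependent phase $(-1)^{\delta(\ell_1+\ell_2)}$ trivial so it can be pulled out of the $\delta$-sum, rather than the relation $\Theta_\nu(A,N-h,2N;\tau)=(-1)^\nu\Theta_\nu(A,N+h,2N;\tau)$ you cite.
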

	\begin{proof}
		Using \eqref{Thetainv} gives  
		\begin{align}\label{ThetaB}
		\Theta_1\left(2p,a,2p;-\frac{1}{\tau}\right)&=-i(-i\tau)^\frac32 (2p)^{-\frac12}\sum_{k\pmod{2p}}\zeta_{2p}^{ka}\Theta_1(2p,k,2p;\tau),\\
		\notag
		\Theta_1\left(6p,a,6p;-\frac{1}{\tau}\right)&=-i(-i\tau)^\frac32(6p)^{-\frac12}\sum_{k\pmod{6p}}\zeta_{6p}^{ka}\Theta_1(6p,k,6p;\tau).
		\end{align}
		Thus by Lemma \ref{Itra}, we obtain that $J_{\boldsymbol{\ell}}(\tau)$ equals
		\begin{equation*}
		\begin{aligned}
		& -\frac{1}{2\sqrt{3}p(-i\tau)} \sum_{\delta\in\{0,1\}}  \sum_{k_1\pmod{2p}\atop{k_2\pmod{6p}}} \zeta_{2p}^{k_1(\ell_1+p\delta)}\zeta_{6p}^{k_2(\ell_2+3p\delta)} I_{\boldsymbol{k}}\left(-\frac{1}{\tau}\right)-\frac{\sqrt{3}}{4p}\sum_{\delta\in\{0,1\}}r_{(\ell_1+p\delta,\ell_2+3p\delta)}(\tau)\\
		&\qquad-\frac{\sqrt{3}}{4p}\sum_{\delta\in\{0,1\}}\Bigg( I_{\Theta_1(2p,\ell_1+p\delta,2p;\cdot)}(\tau)-r_{\Theta_1(2p,\ell_1+p\delta,2p;\cdot)}(\tau)\Bigg)r_{\Theta_1(6p,\ell_2+3p\delta,6p;\cdot)}(\tau).
		\end{aligned}
		\end{equation*}

		To prove the proposition, we are left to simplify the first term. For this, we write
		\begin{align*}	
		\sum_{\delta\in\{0, 1\}}\sum\limits_{k_1\pmod{2p}\atop{k_2\pmod{6p}}}(-1)^{\delta\left(k_1+k_2\right)}\zeta_{2p}^{\ell_1k_1}\zeta_{6p}^{\ell_2k_2} I_{\boldsymbol{k}}\left(-\frac{1}{\tau}\right)
		=2\sum\limits_{{k_1\pmod{2p}\atop{k_2\pmod{6p}}}\atop{k_1\equiv k_2\pmod{2}}}\zeta_{2p}^{\ell_1k_1}\zeta_{6p}^{\ell_2k_2} I_{\boldsymbol{k}}\left(-\frac{1}{\tau}\right).
		\end{align*}
		Making the change of variables $k_1\mapsto k_1+p\delta$, $k_2\mapsto k_2+3p\delta$ yields that this equals
		\begin{multline*}
		2\sum_{{k_1\pmod{p}\atop{k_2\pmod{6p}}}\atop{k_1\equiv k_2\pmod{2}}} \sum_{\delta\in\{0,1\}} \zeta_{2p}^{(k_1+p\delta)\ell_1}\zeta_{6p}^{(k_2+3p\delta)\ell_2} I_{(k_1+p\delta,k_2+3p\delta)}\left(-\frac{1}{\tau}\right)\\
		=2\sum_{{k_1\pmod{2p}\atop{k_2\pmod{6p}}}\atop{k_1\equiv k_2\pmod{2}}}\zeta_{2p}^{\ell_1k_1}\zeta_{6p}^{\ell_2k_2}J_{\boldsymbol{k}}\left(-\frac{1}{\tau}\right). \qedhere
		\end{multline*}
	\end{proof}
	To find transformation properties to use for $\mathcal E_1$, we write it as a $J$-function.
\begin{lemma}
	We have
	\begin{align}\label{EI}
	\mathcal E_1(\tau)=J_{(1,3)}(\tau).
	\end{align}
\end{lemma}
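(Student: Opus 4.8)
The plan is to unwind all the definitions on both sides of \eqref{EI} and check they match term by term. Starting from the right-hand side, I would expand $J_{(1,3)}(\tau) = \sum_{\delta\in\{0,1\}} I_{(1+\delta p,\,3+3\delta p)}(\tau)$ and then substitute the definition of $I_{\boldsymbol k}$, giving
\[
J_{(1,3)}(\tau) = -\frac{\sqrt 3}{4p}\sum_{\delta\in\{0,1\}} I_{\Theta_1(2p,1+\delta p,2p;\,\cdot\,),\,\Theta_1(6p,3+3\delta p,6p;\,\cdot\,)}(\tau).
\]
On the left-hand side, $\mathcal E_1(\tau) = \sum_{\boldsymbol\alpha\in\mathscr S^\ast}\varepsilon(\boldsymbol\alpha)\mathcal E_{1,\boldsymbol\alpha}(p\tau)$, and each $\mathcal E_{1,\boldsymbol\alpha}$ is the double Eichler integral of $\theta_1(\boldsymbol\alpha;\boldsymbol w)+\theta_2(\boldsymbol\alpha;\boldsymbol w)$ against the weight-$(\tfrac12,\tfrac12)$ kernel. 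So the heart of the matter is to show that, after the rescaling $\tau\mapsto p\tau$, the sum $\sum_{\boldsymbol\alpha\in\mathscr S^\ast}\varepsilon(\boldsymbol\alpha)(\theta_1+\theta_2)(\boldsymbol\alpha;\boldsymbol w)$ collapses into the two Shimura-type theta products $\Theta_1(2p,1+\delta p,2p;\cdot)\,\Theta_1(6p,3+3\delta p,6p;\cdot)$.

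The key computation is the following. In $\theta_1(\boldsymbol\alpha;\boldsymbol w)$ the exponential only depends on $\boldsymbol n$ through the pair $(2n_1+n_2,\,n_2)$, and the coefficient is $(2n_1+n_2)n_2$; likewise $\theta_2$ depends on $\boldsymbol n$ through $(3n_1+2n_2,\,n_1)$ with coefficient $(3n_1+2n_2)n_1$. Under the lattice shift $\boldsymbol n\in\boldsymbol\alpha+\Z^2$, these linear forms run over residue classes mod the relevant moduli, so after the scaling $w_j\mapsto p w_j$ built into $\mathcal E_{1,\boldsymbol\alpha}(p\tau)$ (which turns $e^{\frac{3\pi i}{2}(2n_1+n_2)^2 w_1}$ into an exponential of the form $q^{\frac{(2n_1+n_2)^2\cdot 3p}{\,\cdot\,}}$ matching $\Theta_1(6p,\cdot,6p)$ after renaming, etc.) each $\theta_j(\boldsymbol\alpha;p\,\cdot\,)$ becomes a product $\Theta_1(A_1,h_1,N_1;w_1)\Theta_1(A_2,h_2,N_2;w_2)$ with $h_j$ determined by $\boldsymbol\alpha$. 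I would tabulate, for each of the six $\boldsymbol\alpha\in\mathscr S$ (really the three in $\mathscr S^\ast$ together with the symmetry relations \eqref{Thetneg} that identify the other three), the resulting residues $h_1,h_2$, multiply by $\varepsilon(\boldsymbol\alpha)$, and check the total telescopes to exactly the $\delta\in\{0,1\}$ sum with $(h_1,h_2)=(1+\delta p,\,3+3\delta p)$ modulo the ambiguities allowed by \eqref{Thetneg} (namely $h\mapsto h+N$ and $h\mapsto -h$). The prefactor $-\tfrac{\sqrt3}{4p}$ versus $-\tfrac{\sqrt3}{4}$ is absorbed because the $J$-definition carries an extra $\tfrac1p$ and the scaling $\boldsymbol w\mapsto p\boldsymbol w$ contributes a Jacobian-type factor in matching the Eichler kernels — I would track this carefully since it is the easiest place to drop a constant.

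The main obstacle I anticipate is purely bookkeeping: correctly matching the six lattice-coset contributions of $\theta_1+\theta_2$ (with their signs $\varepsilon(\boldsymbol\alpha)$ and the $\varepsilon(\boldsymbol\alpha)=-2$ versus $1$ weights) against the two $\delta$-terms, using \eqref{Thetneg} to reconcile residues that look different but define the same $\Theta_1$. A secondary subtlety is making sure the change of variables in the double integral (the region $\int_{-\overline\tau}^{i\infty}\int_{w_1}^{i\infty}$ and the scaling $\tau\mapsto p\tau$, $\boldsymbol w\mapsto p\boldsymbol w$) is handled consistently so that the kernels $\sqrt{-i(w_1+\tau)}\sqrt{-i(w_2+\tau)}$ on both sides genuinely agree and no spurious power of $p$ survives. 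Once the integrand identity
\[
\sum_{\boldsymbol\alpha\in\mathscr S^\ast}\varepsilon(\boldsymbol\alpha)\bigl(\theta_1(\boldsymbol\alpha;p\boldsymbol w)+\theta_2(\boldsymbol\alpha;p\boldsymbol w)\bigr)
= \frac1p\sum_{\delta\in\{0,1\}}\Theta_1(2p,1+\delta p,2p;w_1)\,\Theta_1(6p,3+3\delta p,6p;w_2)
\]
is established, \eqref{EI} follows immediately by inserting it into the definitions of $\mathcal E_1$ and $J_{(1,3)}$.
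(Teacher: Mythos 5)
Your plan is essentially the paper's proof: expand both sides, rewrite the rank-two theta functions $\theta_1,\theta_2$ as sums of products of one-dimensional Shimura theta functions over residue classes of the linear forms $(2n_1+n_2,n_2)$ and $(3n_1+2n_2,n_1)$, and match using \eqref{Thetneg} together with the rescaling identity $\Theta_1(2p,h,2p;3\tau)=\frac13\Theta_1(6p,3h,6p;\tau)$. Two points need fixing in the execution. First, the constant in your final displayed integrand identity is off by a factor of $p$: since the coefficients $(2n_1+n_2)n_2$ and $(3n_1+2n_2)n_1$ equal $\frac{M_1M_2}{p^2}$ in terms of the integer indices $M_j$ of the Shimura thetas, the right-hand side should carry $\frac1{p^2}$, not $\frac1p$; the substitution $\boldsymbol w\mapsto p\boldsymbol w$ in the double Eichler integral then contributes a single factor $p$ (from $p^2$ in the measure against $p^{-1}$ from the kernel), and $-\frac{\sqrt3}{4}\cdot p\cdot\frac1{p^2}=-\frac{\sqrt3}{4p}$ matches the normalization of $I_{\boldsymbol k}$. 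Second, and more structurally, $\theta_1+\theta_2$ does not jointly ``collapse'' into the target products: the $\theta_1$-block decomposes into products of the transposed shape $\Theta_1(6p,\cdot,6p;w_1)\Theta_1(2p,\cdot,2p;w_2)$, which cannot appear in $J_{(1,3)}$ at all, so the identity forces $\sum_{\boldsymbol\alpha\in\mathscr S^*}\varepsilon(\boldsymbol\alpha)\theta_1(\boldsymbol\alpha;\boldsymbol w)$ to vanish identically (which it does, via \eqref{Thetneg}); only the $\theta_2$-block produces the two terms $\Theta_1(2p,1+\delta p,2p;\cdot)\Theta_1(6p,3+3\delta p,6p;\cdot)$. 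Your tabulation would uncover this if carried out honestly (note also that each $\theta_j(\boldsymbol\alpha;\cdot)$ yields a sum of two such products, because of the parity constraint linking the two linear forms, not a single product), but the cancellation of the $\theta_1$-part is the actual content of the paper's argument rather than incidental bookkeeping, so it should be stated and verified explicitly.
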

\begin{proof}
	As in the proof of Proposition 5.2 of \cite{BKM} we see that
	\begin{align*}
	\sum_{\boldsymbol{\alpha}\in\mathscr S^*} \varepsilon\left(\boldsymbol{\alpha}\right) \theta_1(\boldsymbol{\alpha};\boldsymbol{w})=\frac{1}{p^2}\sum_{\boldsymbol{A}\in\mathcal A}\varepsilon_1\left(\boldsymbol{A}\right)\Theta_1\left(2p,A_1,2p;\frac{3w_1}{p}\right)\Theta_1\left(2p,A_2,2p;\frac{w_2}{p}\right)
	\end{align*}
	with
	\begin{align*}
	\mathcal A:=\!\left\{\left(0,2\right),\left(p,p+2\right),\left(p-1,p-1\right),\left(-1,-1\right),\left(p+1,p-1\right),\left(1,-1\right)\right\}\!, \varepsilon_1(\boldsymbol{A}):= \varepsilon\left(\frac{A_1-A_2}{2p},
	\frac{A_2}{p}\right).
	\end{align*}	
	Using \eqref{Thetneg}, it is not hard to prove that this sum vanishes.
	
	Similarly
	\begin{align}\label{sumB}
	\sum_{\boldsymbol{\alpha}\in\mathscr S^*}
	\varepsilon\left(\boldsymbol{\alpha}\right)
	\theta_2(\boldsymbol{\alpha};\boldsymbol{w})=\frac{1}{p^2}\sum_{\boldsymbol{B}\in \mathcal B}\varepsilon_2\left(\boldsymbol{B}\right)
	\Theta_1\left(2p,B_1,2p;\frac{w_1}{p}\right)\Theta_1\left(2p,B_2,2p;\frac{3w_2}{p}\right)
	\end{align}
	with
	\begin{align*}
	\mathcal{B}&:=\!\left\{(p+1,p-1),(1,-1),(p+2,p),(2,0),(1,1),(p+1,p+1)\right\},\
	\varepsilon_2(\boldsymbol{B}):=\varepsilon\left( \frac{B_2-3B_1}{2p},\frac{B_1}{p}\right).
	\end{align*}
	Using again \eqref{Thetneg} and $\Theta_1(2p,h,2p;3\tau)=\frac13\Theta_1(6p,3h,6p;\tau)$, one obtains that \eqref{sumB} equals
	\begin{equation*}
	\frac1{p^2}\sum_{\delta\in\{0,1\}} \Theta_1\left(2p,1+\delta p,2p;\frac{w_1}{p}\right)\Theta_1\left(6p,3+3\delta p,6p;\frac{w_2}{p}\right).
	\end{equation*}
	This yields the claim by \eqref{Ep1}.
\end{proof}
Proposition \ref{propJ} then implies the following transformation for $\mathcal E_1$.
\begin{corollary}\label{propE1}
	We have 
	\begin{align*}
	\mathcal E_1(\tau)&=-\frac{1}{\sqrt{3}p(-i\tau)}\sum_{\substack{k_1\pmod{p}\\k_2\pmod{6p}\\k_1\equiv k_2\pmod{2}}}\zeta_{2p}^{k_1+k_2}J_{\boldsymbol{k}}\left(-\frac{1}{\tau}\right)+\frac{1}{4}\sum_{\boldsymbol{\alpha}\in\mathscr S^*}\varepsilon(\boldsymbol{\alpha})H_{1,\boldsymbol{\alpha}}(\tau)\\
	&\quad -\frac{\sqrt{3}}{4p}\sum_{\delta\in\{0,1\}}\left(I_{\Theta_1(2p,1+p\delta,2p; .)}(\tau)-r_{\Theta_1(2p,1+p\delta,2p; .)}(\tau)\right)r_{\Theta_1(6p,3+3p\delta,6p;.)}(\tau).
	\end{align*}
\end{corollary}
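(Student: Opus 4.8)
The plan is to apply Proposition \ref{propJ} with $(\ell_1,\ell_2)=(1,3)$, which represents $\mathcal E_1$ by Lemma \eqref{EI}, and then to identify the inhomogeneous terms on the right-hand side with the combinations appearing in the statement. First I would substitute $\boldsymbol{\ell}=(1,3)$ into the transformation of Proposition \ref{propJ}. The leading term becomes $-\frac{1}{\sqrt3 p(-i\tau)}\sum \zeta_{2p}^{k_1}\zeta_{6p}^{3k_2}J_{\boldsymbol{k}}(-1/\tau)$; here one must check that $\zeta_{2p}^{k_1\ell_1}\zeta_{6p}^{k_2\ell_2}=\zeta_{2p}^{k_1}\zeta_{6p}^{3k_2}=\zeta_{2p}^{k_1}\zeta_{2p}^{k_2}=\zeta_{2p}^{k_1+k_2}$, using $\zeta_{6p}^{3}=\zeta_{2p}$, which gives exactly the first term claimed in the corollary. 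The last line of Proposition \ref{propJ}, with $\ell_1+p\delta=1+p\delta$ and $\ell_2+3p\delta=3+3p\delta$, reproduces verbatim the last line of the corollary, so nothing needs to be done there.

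The remaining work is to show that the middle term $-\frac{\sqrt3}{4p}\sum_{\delta\in\{0,1\}} r_{(1+p\delta,3+3p\delta)}(\tau)$ from Proposition \ref{propJ} equals $\frac14\sum_{\boldsymbol{\alpha}\in\mathscr S^*}\varepsilon(\boldsymbol{\alpha})H_{1,\boldsymbol{\alpha}}(\tau)$. I would unwind the definition of $r_{\boldsymbol{k}}$: it is the double integral over $\int_0^{i\infty}\int_{w_1}^{i\infty}$ of $\Theta_1(2p,k_1,2p;w_1)\Theta_1(6p,k_2,6p;w_2)$ against the weight-$\frac32{\times}\frac32$ kernel. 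Using the rescaling identities from the proof of Lemma \eqref{EI} — namely $\Theta_1(2p,h,2p;3\tau)=\frac13\Theta_1(6p,3h,6p;\tau)$ and the change of variables that converted $\sum_{\boldsymbol{\alpha}\in\mathscr S^*}\varepsilon(\boldsymbol{\alpha})(\theta_1+\theta_2)(\boldsymbol{\alpha};\boldsymbol{w})$ into $\frac1{p^2}\sum_{\delta}\Theta_1(2p,1+\delta p,2p;\frac{w_1}{p})\Theta_1(6p,3+3\delta p,6p;\frac{w_2}{p})$ — one runs the same substitution backwards inside the integral defining $r_{(1+p\delta,3+3p\delta)}$. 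After substituting $w_j\mapsto p w_j$ and tracking the kernel's homogeneity together with the $p\tau$ versus $\tau$ normalization in \eqref{Ep1}, the sum over $\delta$ collapses back to $\sum_{\boldsymbol{\alpha}\in\mathscr S^*}\varepsilon(\boldsymbol{\alpha})$ times the integral $-\frac{\sqrt3}{4}\int_0^{i\infty}\int_{w_1}^{i\infty}\frac{\theta_1+\theta_2}{\sqrt{-i(w_1+\tau)}\sqrt{-i(w_2+\tau)}}dw_2dw_1$. Comparing with the definition of $H_{1,\boldsymbol{\alpha}}$ (which has prefactor $-\sqrt3$ rather than $-\frac{\sqrt3}{4}$, and lower limit $0$), the constants match to give precisely $\frac14\sum_{\boldsymbol{\alpha}}\varepsilon(\boldsymbol{\alpha})H_{1,\boldsymbol{\alpha}}(\tau)$.

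The main obstacle I expect is bookkeeping the prefactors and the argument rescalings consistently: the factor $-\frac{\sqrt3}{4p}$ in $I_{\boldsymbol{k}}$, the $\frac1{p^2}$ from collapsing the $\theta$-sums, the $\frac13$ from $\Theta_1(2p,h,2p;3\tau)=\frac13\Theta_1(6p,3h,6p;\tau)$, the $p\tau$ appearing in $\mathcal E_{1,\boldsymbol{\alpha}}(p\tau)$ in \eqref{Ep1}, and the homogeneity of the degree-$(\frac32,\frac32)$ Eichler kernel under $w_j\mapsto pw_j$, $\tau\mapsto p\tau$, all have to combine to the single factor $\frac14$ and the clean lower limit $0$ in $H_{1,\boldsymbol{\alpha}}$. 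This is exactly the kind of computation carried out in the proof of Lemma \eqref{EI}, so I would mirror that argument line by line rather than redo it from scratch. Everything else — the $T$-transformation being trivial, the identification of the diagonal error term $I_{\Theta_1}-r_{\Theta_1}$ times $r_{\Theta_1}$ — is immediate from Proposition \ref{propJ}.
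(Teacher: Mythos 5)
Your proposal is correct and follows essentially the same route as the paper: apply Proposition \ref{propJ} with $(\ell_1,\ell_2)=(1,3)$, identify $\mathcal E_1=J_{(1,3)}$ via Lemma \ref{EI}, and reverse the theta-rewriting from that lemma (adding back the vanishing $\theta_1$-sum) to convert the $r_{(1+p\delta,3+3p\delta)}$ terms into $\frac14\sum_{\boldsymbol{\alpha}\in\mathscr S^*}\varepsilon(\boldsymbol{\alpha})H_{1,\boldsymbol{\alpha}}$. The only difference is that you spell out the root-of-unity simplification $\zeta_{6p}^{3k_2}=\zeta_{2p}^{k_2}$ and the $w_j\mapsto pw_j$ bookkeeping, which the paper leaves implicit.
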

\begin{proof}
	We use Proposition \ref{propJ} with $\ell_1=1$ and $\ell_2=3$ and reversing the calculation used to show \eqref{EI}, we obtain that the second term equals
	$
	\frac14\sum_{\boldsymbol{\alpha}\in\mathscr S^*}\varepsilon(\boldsymbol{\alpha})H_{1,\boldsymbol{\alpha}}(\tau)
	$.
	
\end{proof}

\subsection{The function $\mathcal E_2$}
We proceed in the same way as for $\mathcal E_1$.
To rewrite $\mathcal E_2$, defined in \eqref{Ep2}, we set, for $k_1\equiv k_2\pmod{2}$,
\[
\mathcal K_{\boldsymbol{k}}(\tau):=2\mathcal J_{\boldsymbol{k}}(\tau)+\mathcal J_{\left(\frac{k_1+k_2}{2},\frac{k_2-3k_1}{2}\right)}(\tau),
\]
where (note that we changed the normalization in comparison to \cite{BKM})
\begin{align*}
\mathcal J_{\boldsymbol{k}}(\tau):=\sum_{\delta\in\{0,1\}}\mathcal I_{(k_1+p\delta,k_2+3p\delta)}(\tau),
\quad\text{with}\quad
\mathcal I_{\boldsymbol{k}}(\tau):=-\frac{\sqrt{3}}{8\pi}I_{\Theta_1(2p,k_1,2p;\cdot),\Theta_0(6p,k_2,6p;\cdot)}(\tau).
\end{align*}
Moreover set 
\[
R_{\boldsymbol{k}}(\tau):=\int_0^{i\infty}\int_{w_1}^{i\infty}\frac{\Theta_1(2p,k_1,2p;w_1)\Theta_0(6p,k_2,6p;w_2)}{\sqrt{-i(w_1+\tau)}(-i(w_2+\tau))^\frac32}dw_2dw_1.
\]
We have the following transformation law for the function $\mathcal K_\ell$.
\begin{proposition}\label{propGenTrans}
	We have, for $\ell_1\equiv \ell_2 \pmod{2}$,
	\begin{align*}
	&\mathcal K_{\boldsymbol{\ell}}(\tau)=\frac{i}{2\sqrt{3}p}\sum_{\substack{ k_1\pmod{p} \\ k_2\pmod{6p} \\ k_1\equiv k_2\pmod{2}}} \zeta_{2p}^{k_1\ell_1}\zeta_{6p}^{k_2\ell_2}\mathcal K_{\boldsymbol{k}}\left(-\frac{1}{\tau}\right)\\
	&\qquad\quad-\frac{\sqrt{3}}{8\pi}\sum_{\delta\in\{0,1\}}\left(2R_{(k_1+p\delta,k_2+3p\delta)}(\tau)+R_{\left(\frac{k_1+k_2}{2}+p\delta,\frac{k_2-3k_1}{2}+3p\delta\right)}(\tau)\right)\\
	&\qquad\quad-\frac{\sqrt{3}}{8\pi}\sum_{\delta\in\{0,1\}}\Bigg(2\left(I_{\Theta_1\left(2p,\ell_1+p\delta,2p;.\right)}(\tau)-r_{\Theta_1(2p,\ell_1+p\delta,2p;.)}(\tau)\right)r_{\Theta_0\left(6p,\ell_2+3p\delta,6p;.\right)}(\tau)\\
	&\qquad\quad+\left(I_{\Theta_1\left(2p,\frac{\ell_1+\ell_2}{2}+p\delta,2p;.\right)}(\tau)-r_{\Theta_1\left(2p,\frac{\ell_1+\ell_2}{2}+p\delta,2p;.\right)}(\tau)\right)r_{\Theta_0\left(6p,\frac{\ell_2-3\ell_1}{2}+3p\delta,6p;.\right)}(\tau)\Bigg).
	\end{align*}
\end{proposition}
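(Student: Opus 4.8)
The plan is to run the argument in close analogy with the proof of Proposition \ref{propJ}, carrying the second index $\left(\frac{k_1+k_2}{2},\frac{k_2-3k_1}{2}\right)$ that occurs in the definition of $\mathcal{K}_{\boldsymbol{k}}$ along through every step. First I would record, besides the weight $\frac32$ transformation \eqref{ThetaB} of $\Theta_1(2p,a,2p;\,\cdot\,)$, the weight $\frac12$ companion obtained from \eqref{Thetainv},
\[
\Theta_0\left(6p,a,6p;-\frac1\tau\right)=(-i\tau)^{\frac12}(6p)^{-\frac12}\sum_{k\pmod{6p}}\zeta_{6p}^{ka}\,\Theta_0(6p,k,6p;\tau).
\]
Feeding these two transformations into Lemma \ref{Itra}, applied to $I_{\Theta_1(2p,k_1,2p;\,\cdot\,),\,\Theta_0(6p,k_2,6p;\,\cdot\,)}$ with $(\kappa_1,\kappa_2)=\left(\frac32,\frac12\right)$, rewrites each $\mathcal{I}_{\boldsymbol{k}}(\tau)$ as a main term proportional (with constant of proportionality $-\frac{i}{2\sqrt3 p}(-i\tau)^{\kappa_1+\kappa_2-4}$) to $\sum_{k_1\pmod{2p},\,k_2\pmod{6p}}\zeta_{2p}^{k_1(\,\cdot\,)}\zeta_{6p}^{k_2(\,\cdot\,)}\,\mathcal{I}_{\boldsymbol{k}}(-1/\tau)$, plus the two error pieces of \eqref{Stran}: the double integral with lower endpoint $0$, which after the $\delta$-sum becomes the $R$-terms, and the product $I_{\Theta_1}(\tau)r_{\Theta_0}(\tau)-r_{\Theta_1}(\tau)r_{\Theta_0}(\tau)$.

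Second, I would sum over $\delta\in\{0,1\}$ to pass from $\mathcal{I}$ to $\mathcal{J}$ and then form $\mathcal{K}_{\boldsymbol{\ell}}=2\mathcal{J}_{\boldsymbol{\ell}}+\mathcal{J}_{\left(\frac{\ell_1+\ell_2}{2},\frac{\ell_2-3\ell_1}{2}\right)}$. The main term coming from $2\mathcal{J}_{\boldsymbol{\ell}}$ is treated exactly as in Proposition \ref{propJ}: the sum over $\delta$ promotes the $k$-sums to sums over all residue classes, the factor $\sum_{\delta}(-1)^{\delta(k_1+k_2)}$ enforces $k_1\equiv k_2\pmod2$ with multiplicity two, and the substitution $k_1\mapsto k_1+p\delta$, $k_2\mapsto k_2+3p\delta$ repackages the result as $2\mathcal{J}_{\boldsymbol{k}}(-1/\tau)$. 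The genuinely new point is the main term from $\mathcal{J}_{\left(\frac{\ell_1+\ell_2}{2},\frac{\ell_2-3\ell_1}{2}\right)}$: there the characters appear as $\zeta_{2p}^{k_1\frac{\ell_1+\ell_2}{2}}\zeta_{6p}^{k_2\frac{\ell_2-3\ell_1}{2}}$, and a direct comparison of exponents shows this equals $\zeta_{2p}^{k_1'\ell_1}\zeta_{6p}^{k_2'\ell_2}$ after the linear substitution $k_1=\frac{k_1'+k_2'}{2}$, $k_2=\frac{k_2'-3k_1'}{2}$, that is $\boldsymbol{k}=\left(\frac{k_1'+k_2'}{2},\frac{k_2'-3k_1'}{2}\right)$. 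Invoking the oddness of $\Theta_1$, the evenness of $\Theta_0$, the periodicities \eqref{Thetneg}, and the scalings $\Theta_1(2p,h,2p;3\tau)=\frac13\Theta_1(6p,3h,6p;\tau)$ and $\Theta_0(2p,h,2p;3\tau)=\Theta_0(6p,3h,6p;\tau)$ to reconcile the moduli $2p\leftrightarrow p$ and $6p\leftrightarrow 3p$, this term reorganizes into $\sum_{\boldsymbol{k}}\zeta_{2p}^{k_1\ell_1}\zeta_{6p}^{k_2\ell_2}\,\mathcal{J}_{\left(\frac{k_1+k_2}{2},\frac{k_2-3k_1}{2}\right)}(-1/\tau)$. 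Adding the two main terms reconstitutes the main term of the proposition, since $2\mathcal{J}_{\boldsymbol{k}}(-1/\tau)+\mathcal{J}_{\left(\frac{k_1+k_2}{2},\frac{k_2-3k_1}{2}\right)}(-1/\tau)=\mathcal{K}_{\boldsymbol{k}}(-1/\tau)$, and collecting the error pieces over $\delta$ and over the two $\mathcal{J}$-summands yields the remaining error lines of the asserted identity.

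The transcription of Lemma \ref{Itra} and the tracking of numerical prefactors are routine. I expect the main obstacle to be the two-layer reindexing in the $\mathcal{J}_{\left(\frac{\ell_1+\ell_2}{2},\frac{\ell_2-3\ell_1}{2}\right)}$ contribution: one must perform the $\delta$-sum (which trades the modulus $p$ for $2p$), the linear substitution $\boldsymbol{k}\mapsto\left(\frac{k_1+k_2}{2},\frac{k_2-3k_1}{2}\right)$ forced by the character comparison, and the parity doubling all at once, and then verify that the halvings implicit in that substitution are consistent with the modulus changes $2p\leftrightarrow p$ and $6p\leftrightarrow 3p$ — precisely the interplay that the symmetry relations \eqref{Thetneg} together with the $\tau\mapsto 3\tau$ scalings are designed to resolve. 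Once that compatibility is established, the error lines drop out of \eqref{Stran} with no further work.
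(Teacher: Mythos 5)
Your proposal is correct and takes essentially the same route as the paper: apply Lemma \ref{Itra} with the $S$-transformations of $\Theta_1(2p,\cdot)$ and $\Theta_0(6p,\cdot)$ to each $\mathcal I_{\boldsymbol{k}}$, sum over $\delta$ to enforce $k_1\equiv k_2\pmod 2$ and reassemble $\mathcal J$, and convert the $\mathcal J_{\left(\frac{\ell_1+\ell_2}{2},\frac{\ell_2-3\ell_1}{2}\right)}$ contribution via the change of variables $(k_1,k_2)\mapsto\left(\frac{k_1+k_2}{2},\frac{k_2-3k_1}{2}\right)$, which is exactly the paper's final step, with the error terms collecting into the $R$- and $(I-r)r$-pieces. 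The only place you deviate is in the overall prefactor and automorphy factor $(-i\tau)^{\kappa_1+\kappa_2-4}$, where your bookkeeping is in fact the careful one; the constant stated in the proposition (and in Corollary \ref{prop2}) is not internally consistent with the paper's own intermediate display, so this is not a gap in your argument.
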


\begin{proof}
	Using \eqref{ThetaB} and
	\begin{align*}
	\Theta_0\left(6p,a,6p;-\frac{1}{\tau}\right)&= (-i\tau)^{\frac12} \frac{1}{\sqrt{6p}} \sum_{k \pmod{6p}} \zeta_{6p}^{ka} \Theta_0(6p,k,6p;\tau),
	\end{align*}
	Proposition \ref{prop2} gives that $\mathcal K_{\ell_1,\ell_2}(\tau)$ equals 
	\begin{align*}
	&\sum_{\substack{k_1\pmod{2p}\\k_2\pmod{6p}}}\left(2\zeta_{2p}^{k_1(\ell_1+p\delta)}\zeta_{6p}^{k_2(\ell_2+3p\delta)}+2\zeta_{2p}^{k_1\left(\frac{\ell_1+\ell_2}{2}+p\delta\right)}\zeta_{6p}^{k_2\left(\frac{\ell_2-3\ell_1}{2}+3p\delta\right)}\right)\frac{i}{16p\pi(-i\tau )^2}
	\mathcal I_{\boldsymbol{k}}\left(-\frac{1}{\tau}\right)\\
	&-\frac{\sqrt{3}}{8\pi}\sum_{\delta\in\{0,1\}}\left(2R_{(\ell_1+p\delta,\ell_2+3p\delta)}(\tau)+R_{\left(\frac{\ell_1+\ell_2}{2}+p\delta,\frac{\ell_2-3\ell_1}{2}+3p\delta\right)}(\tau)\right)\\
	&-\frac{\sqrt{3}}{8\pi}\sum_{\delta\in\{0,1\}}\Bigg(2\left(I_{\Theta_1(2p,\ell_1+p\delta,2p;.)}(\tau)-r_{\Theta_1(2p,\ell_1+p\delta,2p;.)}(\tau)\right)r_{\Theta_0(6p,\ell_2+3p\delta,6p;\cdot)}(\tau)\\
	&\qquad\qquad\qquad+\left(I_{\Theta_1\left(2p,\frac{\ell_1+\ell_2}{2}+p\delta,2p;.\right)}(\tau)-r_{\Theta_1\left(2p,\frac{\ell_1+\ell_2}{2}+p\delta,2p;.\right)}(\tau)\right)r_{\Theta_0(6p,\ell_2+3p\delta,6p;\cdot)}(\tau)\Bigg).
	\end{align*}
	
	We are left to simplify the first term. As in the proof of Proposition 5.3 the sum on $k_1,k_2$ equals 
	\begin{equation*}
	2\sum_{\substack{k_1\pmod{p}\\k_2\pmod{6p}\\k_1\equiv k_2\pmod{2}}}\left(2\zeta_{2p}^{k_1+k_2}+\zeta_p^{k_1}\right)\mathcal J_{\boldsymbol{k}}\left(-\frac{1}{\tau}\right).
	\end{equation*}
	In the contribution from the second term, we change $k_1$ into $\frac{k_1+k_2}{2}$ and $k_2$ into $\frac{k_2-3k_1}{2}$ giving the claim. 
\end{proof}

We next write $\mathcal E_2$ in terms of the $\mathcal K$-functions.
\begin{lemma}
	We have
	\begin{equation}\label{E2K}
	\mathcal E_2(\tau)=\mathcal K_{(1,3)}(\tau).
	\end{equation}
\end{lemma}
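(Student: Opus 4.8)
The plan is to mimic the proof of the identity $\mathcal E_1(\tau)=J_{(1,3)}(\tau)$ above, with Proposition 5.3 of \cite{BKM} playing the role there played by Proposition 5.2. That is, I would rewrite each of the theta functions $\theta_3,\theta_4,\theta_5$, summed over $\boldsymbol\alpha\in\mathscr{S}^\ast$, as a finite sum of products of Shimura theta functions, substitute the result into the definition of $\mathcal E_2$, and then identify the outcome with $\mathcal K_{(1,3)}$ by matching against the definitions of $\mathcal I_{\boldsymbol k}$, $\mathcal J_{\boldsymbol k}$, and $\mathcal K_{\boldsymbol k}$.

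Concretely, writing $\boldsymbol n=\boldsymbol\alpha+\boldsymbol m$ for each of the three $\boldsymbol\alpha\in\mathscr{S}^\ast$ and passing to the summation variables $2n_1+n_2$ and $n_2$ (for $\theta_3$) and $3n_1+2n_2$ and $n_1$ (for $\theta_4$ and $\theta_5$), one expresses $\sum_{\boldsymbol\alpha\in\mathscr{S}^\ast}\theta_3$ through products $\Theta_1(2p,\,\cdot\,,2p;\tfrac{3w_1}{p})\Theta_0(2p,\,\cdot\,,2p;\tfrac{w_2}{p})$, $\sum_{\boldsymbol\alpha\in\mathscr{S}^\ast}\theta_4$ through $\Theta_1(2p,\,\cdot\,,2p;\tfrac{w_1}{p})\Theta_0(2p,\,\cdot\,,2p;\tfrac{3w_2}{p})$, and $\sum_{\boldsymbol\alpha\in\mathscr{S}^\ast}\theta_5$ through $\Theta_0(2p,\,\cdot\,,2p;\tfrac{w_1}{p})\Theta_1(2p,\,\cdot\,,2p;\tfrac{3w_2}{p})$, each over an explicit finite index set. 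Using \eqref{Thetneg}, the scaling identity $\Theta_1(2p,h,2p;3\tau)=\tfrac13\Theta_1(6p,3h,6p;\tau)$, and its weight-$\tfrac12$ analogue $\Theta_0(2p,h,2p;3\tau)=\Theta_0(6p,3h,6p;\tau)$, the $\theta_3$-contribution drops out (just as the $\theta_1$-contribution did in the $\mathcal E_1$-case), while the $\theta_4$- and $\theta_5$-contributions collapse to a single sum over $\delta\in\{0,1\}$ of products at levels $2p$ and $6p$ with arguments $\tfrac{w_1}{p}$ and $\tfrac{w_2}{p}$. After the substitution $\boldsymbol w\mapsto p\boldsymbol w$ in the double integrals defining $\mathcal E_{2,\boldsymbol\alpha}(p\tau)$ (which absorbs all the $p$-scalings), the $2\theta_3-\theta_4$-part becomes a linear combination of the integrals $I_{\Theta_1(2p,1+p\delta,2p;\cdot),\Theta_0(6p,3+3p\delta,6p;\cdot)}(\tau)$, $\delta\in\{0,1\}$, which by a constant check equals $2\mathcal J_{(1,3)}(\tau)$.

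The delicate step is the $\theta_5$-part, where the weight-$\tfrac32$ theta sits in the $w_2$-slot and the kernel is $(-i(w_1+\tau))^{-3/2}(-i(w_2+\tau))^{-1/2}$ rather than the ``$\mathcal I_{\boldsymbol k}$-kernel'' $(-i(w_1+\tau))^{-1/2}(-i(w_2+\tau))^{-3/2}$. To bring it into $\mathcal I_{\boldsymbol k}$-shape I would interchange $w_1\leftrightarrow w_2$ in the iterated integral: over the complementary triangle this produces precisely an integral of $\mathcal I_{\boldsymbol k}$-type, while the leftover integral over the full product region factors as a product of two one-dimensional non-holomorphic Eichler integrals, which vanishes because the relevant $\Theta_1$-sum is odd and its index set is invariant under $h\mapsto -h$ (again by \eqref{Thetneg}). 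A further rescaling of the integration variables via the same level-changing identities then produces $\mathcal J_{\left(\frac{k_1+k_2}{2},\frac{k_2-3k_1}{2}\right)}(\tau)$ with $(k_1,k_2)=(1,3)$, i.e., $\mathcal J_{(2,0)}(\tau)$, and putting everything together gives $\mathcal E_2(\tau)=2\mathcal J_{(1,3)}(\tau)+\mathcal J_{(2,0)}(\tau)=\mathcal K_{(1,3)}(\tau)$. I expect this last manipulation — juggling the slot interchange, the level changes $\Theta_\nu(2p,h,2p;3\tau)\leftrightarrow\Theta_\nu(6p,3h,6p;\tau)$, and the rescaling of the double integral, and checking that the spurious product of Eichler integrals cancels — to be the main obstacle; it is precisely what the asymmetric shape $\mathcal K_{\boldsymbol k}=2\mathcal J_{\boldsymbol k}+\mathcal J_{(\frac{k_1+k_2}{2},\frac{k_2-3k_1}{2})}$ is built to accommodate.
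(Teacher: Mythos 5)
Your opening strategy is the right one (and is surely what the authors intended, since they omit the proof): redo the proof of the lemma for $\mathcal E_1$, i.e.\ rewrite $\sum_{\boldsymbol{\alpha}\in\mathscr S^\ast}\theta_i(\boldsymbol{\alpha};p\boldsymbol w)$, $i=3,4,5$, as finite combinations of products of Shimura theta functions and match with the definitions of $\mathcal I_{\boldsymbol k},\mathcal J_{\boldsymbol k},\mathcal K_{\boldsymbol k}$; your identification of the shapes of the three theta sums and your claim that the $\theta_3$-sum drops out are correct. The genuine gap is your treatment of the $\theta_5$-part. Carrying out the change of variables $(a,b)=(3n_1+2n_2,n_1)$ and splitting $b\pmod{2p}$, the $\boldsymbol{\alpha}=(0,1-\tfrac1p)$ contribution to $\sum_{\boldsymbol{\alpha}}\theta_5$ vanishes outright because its weight-$\tfrac32$ factors are $\Theta_1(6p,0,6p;\cdot)$ and $\Theta_1(6p,3p,6p;\cdot)$, which are zero, and the remaining four terms, coming from $\boldsymbol{\alpha}=(1-\tfrac1p,\tfrac2p)$ and $(\tfrac1p,1-\tfrac1p)$, are proportional to $\Theta_0(2p,1;w_1)\Theta_1(6p,\pm3;w_2)$ and $\Theta_0(2p,p+1;w_1)\Theta_1(6p,3p\mp3;w_2)$ and cancel in pairs by \eqref{Thetneg} (using $\Theta_1(6p,3p-3,6p;\cdot)=-\Theta_1(6p,3p+3,6p;\cdot)$). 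Hence $\sum_{\boldsymbol{\alpha}\in\mathscr S^\ast}\theta_5(\boldsymbol{\alpha};\cdot)\equiv0$: the second integral in $\mathcal E_2$ contributes nothing, no interchange of the order of integration is needed, and $\mathcal J_{(2,0)}$ cannot come from there. Instead, the $(2,0)$-family arises from the $\theta_4$-sum, namely from $\boldsymbol{\alpha}=(0,1-\tfrac1p)$, whose factorization yields terms $\Theta_1(2p,-2,2p;w_1)\Theta_0(6p,0,6p;w_2)$ and $\Theta_1(2p,p-2,2p;w_1)\Theta_0(6p,3p,6p;w_2)$; here the weight-$\tfrac12$ factors are even in the index and survive. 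So your assertion that the $2\theta_3-\theta_4$-part collapses to exactly $2\mathcal J_{(1,3)}$ is also false: it produces all four $\mathcal I$-terms of $\mathcal K_{(1,3)}$, the $(1,3)$-family (with multiplicity two) coming from the other two elements of $\mathscr S^\ast$ and the $(2,0)$-family from $\boldsymbol{\alpha}=(0,1-\tfrac1p)$.

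Your proposed mechanism is moreover internally inconsistent: you kill the leftover product of one-dimensional Eichler integrals produced by the Fubini swap by invoking the oddness of the relevant $\Theta_1$-combination under $h\mapsto-h$, but exactly that symmetry annihilates the double-integral part of the $\theta_5$-contribution as well, so the swap could never generate a nonzero $\mathcal J_{(2,0)}$ in the first place. Once the computation is organized as above, the proof is a verbatim replay of the $\mathcal E_1$-argument with no extra analytic input; what does require care is the constant check you wave at: after $w_j\mapsto pw_j$ the kernel and measure factors cancel (unlike the weight-one case), and the prefactors $\tfrac1p$ from rescaling the lattice, the $\tfrac13$ absorbed by $\Theta_1(2p,h,2p;3\tau)=\tfrac13\Theta_1(6p,3h,6p;\tau)$, and the sign with which the $(2,0)$-family enters relative to $\mathcal K_{(1,3)}=2\mathcal J_{(1,3)}+\mathcal J_{(2,0)}$ must all be tracked against the renormalization of $\mathcal I_{\boldsymbol k}$ mentioned in the text (with the normalizations as literally printed one appears to pick up an overall $\tfrac1p$ and a relative minus sign on the $(2,0)$-family, so this bookkeeping is not a formality).
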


Proposition 5.5 yields the following transformation for $\mathcal E_2$.
\begin{corollary}\label{prop2}
	We have 
	\begin{align*}
	\mathcal E_2(\tau) &= \frac{i}{8\pi p(-i\tau)^2} \sum_{\substack{k_1\pmod{p}\\k_2\pmod{6p}\\k_1\equiv k_2\pmod{2}}} \zeta_{2p}^{k_1+k_2} \mathcal K_{\boldsymbol{k}}\left(-\frac{1}{\tau}\right) + \frac{i}{4} \sum_{\boldsymbol{\alpha}\in\mathscr S^*}  H_{2,\boldsymbol{\alpha}}(\tau)\\
	&\quad-\frac{\sqrt{3}}{8\pi}\sum_{\delta\in\{0,1\}}\Big(2\left(I_{\Theta_1(2p,1+p\delta,2p;\cdot)}(\tau)-r_{\Theta_1(2p,1+p\delta,2p;\cdot)}(\tau)\right)r_{\Theta_0(6p,3+3p\delta,6p;\cdot)}(\tau)\\
	&\qquad\qquad\qquad\qquad-\left(I_{\Theta_1(2p,2+p\delta,2p;\cdot)}(\tau)-r_{\Theta_1(2p,1+p\delta,2p;\cdot)}(\tau)\right)r_{\Theta_0(6p,3p\delta,6p;\cdot)}(\tau)\Big).
	\end{align*}
\end{corollary}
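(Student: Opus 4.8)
The plan is to obtain this as the specialization $\ell_1=1$, $\ell_2=3$ of Proposition~\ref{propGenTrans}, in complete analogy with the way Corollary~\ref{propE1} was deduced from Proposition~\ref{propJ}. By \eqref{E2K}, the left-hand side of Proposition~\ref{propGenTrans} with $\boldsymbol{\ell}=(1,3)$ is exactly $\mathcal{E}_2(\tau)$, so it only remains to match the three groups of terms on the right-hand side with those in the corollary. For the twisted $\mathcal{K}$-sum this is immediate: setting $\ell_1=1$, $\ell_2=3$ and using $\zeta_{6p}^{3}=\zeta_{2p}$ turns the character $\zeta_{2p}^{k_1\ell_1}\zeta_{6p}^{k_2\ell_2}$ into $\zeta_{2p}^{k_1+k_2}$, reproducing the first term. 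For the last two lines of Proposition~\ref{propGenTrans} one simply substitutes $\frac{\ell_1+\ell_2}{2}=2$ and $\frac{\ell_2-3\ell_1}{2}=0$, which gives the $\delta$-sum of products of $I_{\Theta_1}$-, $r_{\Theta_1}$- and $r_{\Theta_0}$-terms appearing in the corollary.

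The one step that requires work is the identification of the $R$-contribution
\[
-\frac{\sqrt{3}}{8\pi}\sum_{\delta\in\{0,1\}}\left(2R_{(1+p\delta,\,3+3p\delta)}(\tau)+R_{(2+p\delta,\,3p\delta)}(\tau)\right)
\]
with $\frac{i}{4}\sum_{\boldsymbol{\alpha}\in\mathscr{S}^*}H_{2,\boldsymbol{\alpha}}(\tau)$. The prefactors already agree, since $H_{2,\boldsymbol{\alpha}}$ carries the constant $\frac{\sqrt{3}i}{2\pi}$ and $\frac{i}{4}\cdot\frac{\sqrt{3}i}{2\pi}=-\frac{\sqrt{3}}{8\pi}$; what is left is a theta-function identity between integrands, namely the computation behind \eqref{E2K} read in reverse (exactly as the proof of Corollary~\ref{propE1} reverses \eqref{EI}). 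Concretely, one uses the identities expressing $\sum_{\boldsymbol{\alpha}\in\mathscr{S}^*}(2\theta_3(\boldsymbol{\alpha};\boldsymbol{w})-\theta_4(\boldsymbol{\alpha};\boldsymbol{w}))$ and $\sum_{\boldsymbol{\alpha}\in\mathscr{S}^*}\theta_5(\boldsymbol{\alpha};\boldsymbol{w})$ as combinations of products $\Theta_1(2p,\cdot,2p;\cdot)\Theta_0(6p,\cdot,6p;\cdot)$ — collapsing the sum over $\mathscr{S}^*$ by discarding the pieces that vanish by \eqref{Thetneg} and applying $\Theta_1(2p,h,2p;3\tau)=\frac13\Theta_1(6p,3h,6p;\tau)$ and the analogous identity for $\Theta_0$ — and then substitutes these into the defining double integrals of $H_{2,\boldsymbol{\alpha}}$, rescaling $w_j\mapsto w_j/p$ to absorb the factor $p^{-2}$ and the arguments $3w_1$, $w_1$, etc. The two-term shape $2R+R$ mirrors the combination $\mathcal{K}_{\boldsymbol{k}}=2\mathcal{J}_{\boldsymbol{k}}+\mathcal{J}_{(\frac{k_1+k_2}{2},\,\frac{k_2-3k_1}{2})}$, the second summand capturing the $\theta_5$-part, whose two Gaussian exponents carry the roles of $w_1$ and $w_2$ interchanged relative to $\theta_3$; the change of variables $k_1\mapsto\frac{k_1+k_2}{2}$, $k_2\mapsto\frac{k_2-3k_1}{2}$ built into $\mathcal{K}_{\boldsymbol{k}}$ reconciles the indices.

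I expect the main obstacle to be precisely this last bookkeeping: matching the $2\theta_3-\theta_4$ and $\theta_5$ summands against the $2\mathcal{J}+\mathcal{J}$ structure together with the correct assignment of the weights $\frac12$ versus $\frac32$ to $w_1$ and $w_2$ in each double integral, and keeping track that the $H_{2,\boldsymbol{\alpha}}$-pieces are the ones with lower integration limit $0$ — they arise from the ``$\int_0^{i\infty}\int_{w_1}^{i\infty}$'' term produced by the splitting in Lemma~\ref{Itra}, not from the Eichler integrals $\mathcal{E}_{2,\boldsymbol{\alpha}}$ themselves, which have lower limit $-\overline{\tau}$. Once the $0$-based analogue of \eqref{E2K} is in hand (which is how \eqref{E2K} is proved in the first place), the corollary follows by reading off the three groups of terms from Proposition~\ref{propGenTrans} at $\boldsymbol{\ell}=(1,3)$.
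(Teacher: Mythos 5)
Your proposal matches the paper's own proof: the authors likewise obtain the corollary by specializing Proposition~\ref{propGenTrans} to $\boldsymbol{\ell}=(1,3)$, using \eqref{E2K} to identify the left-hand side with $\mathcal E_2$, and reversing the theta-decomposition behind \eqref{E2K} to recognize the $R$-terms as $\frac{i}{4}\sum_{\boldsymbol{\alpha}\in\mathscr S^*}H_{2,\boldsymbol{\alpha}}(\tau)$, exactly as you describe (including the prefactor check $\frac{i}{4}\cdot\frac{\sqrt{3}i}{2\pi}=-\frac{\sqrt{3}}{8\pi}$). The only discrepancies (the $(-i\tau)^{-2}$ and constant in the first term, and the sign/index on the last product) are internal to the paper's stated formulas, not gaps in your argument.
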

\begin{proof}
	The claim follows from Proposition \ref{propGenTrans}. Reversing the calculations required for the proof of \eqref{E2K} yields that the second summand equals $\frac{i}{4}\sum_{\boldsymbol{\alpha}\in\mathscr S^*}  H_{2,\boldsymbol{\alpha}}(\tau)$.
\end{proof}

\subsection{Proof Theorem \ref{maintheorem1}}
We are now ready to prove a refined version of Theorem \ref{maintheorem1}.
\begin{theorem}\label{quantumref}

	\begin{enumerate}[leftmargin=*]
		
		\item[\textnormal{(1)}] The function $\widehat{F}_1:\Q\to\C$ defined by $\widehat{F}_1(\frac{h}{k}):=F_1(e^{2\pi i\frac{ph}{k}})$ is a component of a  vector-valued quantum modular form of depth two and weight one.
		
		\item[\textnormal{(2)}] The function $\widehat{F}_2:\mathcal \Q \to \C$ defined by $\widehat{F}_2(\frac{h}{k}):=F_2(e^{2\pi i\frac{ph}{k}})$ is a component of a vector-valued quantum modular form of depth two and weight two.
		
		% and quantum set
		%$$
		%\mathcal Q_1:=\left\{ \frac{h}{k}\in\Q : \gcd(h,k) = 1, p|k \right\} \subset \Q.
		%$$
	\end{enumerate}
\end{theorem}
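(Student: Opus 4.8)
The plan is to derive Theorem \ref{quantumref} by combining the asymptotic matching of the false theta functions $F_1,F_2$ with their double Eichler integral companions (equations \eqref{AsE1} and \eqref{AsE2}) together with the explicit modular transformations for $\mathcal{E}_1$ and $\mathcal{E}_2$ obtained in Corollaries \ref{propE1} and \ref{prop2}. First I would recall that, by \eqref{AsE1}, the function $\widehat{F}_1(\frac hk)$ agrees to all asymptotic orders at every rational point with $\mathbb{E}_1(\frac{it}{2\pi}-\frac hk)$ as $t\to 0^+$; this is precisely the mechanism that lets one replace the badly-behaved $q$-series $F_1$ by the ``companion'' Eichler integral $\mathbb{E}_1$ when testing the quantum modular transformation property. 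Thus it suffices to show that the Eichler integral $\mathcal{E}_1$ (equivalently $\mathbb{E}_1=\mathcal{E}_1(\cdot/p)$) satisfies the defining transformation of a depth-two vector-valued quantum modular form of weight one, and similarly $\mathcal{E}_2$ of weight two.

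Next I would assemble the vector of components. The natural vector is indexed by the residue data appearing in Proposition \ref{propJ}: the functions $J_{\boldsymbol{k}}$ with $k_1\pmod p$, $k_2\pmod{6p}$, $k_1\equiv k_2\pmod 2$ (and their $\mathcal{K}$-analogues for $\mathcal{E}_2$). I would check the two generating transformations. For $T=\left(\begin{smallmatrix}1&1\\0&1\end{smallmatrix}\right)$, invariance (up to the multiplier coming from $|T$ on the inner theta functions) follows from \eqref{Tshift} of Lemma \ref{Itra}. For $S=\left(\begin{smallmatrix}0&-1\\1&0\end{smallmatrix}\right)$, Proposition \ref{propJ} (resp. Proposition \ref{propGenTrans}) shows that $J_{\boldsymbol{\ell}}(\tau)$ minus $(-i\tau)^{-1}$ times a finite linear combination of the $J_{\boldsymbol{k}}(-1/\tau)$ equals a sum of three types of terms: the double theta integrals $r_{\boldsymbol{k}}$, which are everywhere real-analytic; products $I_{\Theta_1}(\tau)\cdot r_{\Theta_1}(\tau)$, which lie in $\mathcal{Q}_{\kappa}(\chi)\mathcal{O}(R)$ since the one-dimensional Eichler integral $I_{\Theta_1}$ is a weight-$\frac12$ (after renormalization) vector-valued quantum modular form by Section 3 and $r_{\Theta_1}$ is real-analytic; and products $r_{\Theta_1}(\tau)r_{\Theta_1}(\tau)$, again in $\mathcal{O}(R)$. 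This exhibits exactly the membership $\sum_m \mathcal{Q}^1_{\kappa_m}(\chi_m)\mathcal{O}(R)+\mathcal{O}(R)$ required in Definition \ref{defgen} for depth $P=2$, with weight $k=1$ read off from the $(-i\tau)^{-1}$ factor. The weight-two case for $\mathcal{E}_2$ is identical, using Corollary \ref{prop2}, with the $(-i\tau)^{-2}$ factor giving $k=2$ and the one-dimensional Eichler integrals $I_{\Theta_1},I_{\Theta_0}$ of weights $\frac12$ or $\frac32$ supplying the depth-one quantum pieces.

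Finally I would transfer this back to $\widehat{F}_1,\widehat{F}_2$: since the transformation behaviour of a quantum modular form is detected only through its asymptotic expansions at rationals, and $\widehat{F}_j$ matches $\mathbb{E}_j$ to infinite order there, $\widehat{F}_j$ inherits membership in the same space $\mathcal{Q}^2_k(\chi)$; one only needs to verify that the ``error'' terms $r_{\boldsymbol k}$, $r_{\Theta}$, etc., restricted to the rational arguments in question, extend real-analytically across an open subset of $\R$, which is clear because each such integral $\int_0^{i\infty}$ (or $\int_{d/c}^{i\infty}$) of a cusp form against a power of $-i(w+\tau)$ converges for $\tau$ in a neighbourhood of the real line away from the relevant cusp. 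The main obstacle I anticipate is bookkeeping: one must be careful that the finite $q$-series discrepancy between $F_1(e^{2\pi i ph/k})$ and the value obtained from the Eichler integral (the shift built into the hat-normalization $\widehat F_j(\frac hk)=F_j(e^{2\pi i ph/k})$) is consistently absorbed, and that the multiplier system $\boldsymbol{\chi}=(\chi_{j,\ell})$ induced by the finite Fourier transform $\zeta_{2p}^{k_1\ell_1}\zeta_{6p}^{k_2\ell_2}$ in Proposition \ref{propJ} genuinely closes up on the chosen finite index set under both $S$ and $T$ — i.e., that the span of the $J_{\boldsymbol k}$ is finite-dimensional and $\SL_2(\Z)$-stable. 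Once the index set is pinned down and shown stable, the rest is a direct unwinding of Corollaries \ref{propE1} and \ref{prop2} against Definition \ref{defgen}.
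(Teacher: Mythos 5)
Your proposal is correct and follows essentially the same route as the paper: identify $\widehat{F}_j$ at rationals with the boundary values of the companions $\mathbb{E}_j$ via \eqref{AsE1} and \eqref{AsE2}, then invoke Proposition \ref{propJ} with Corollary \ref{propE1} (resp.\ Proposition \ref{propGenTrans} with Corollary \ref{prop2}) to read off the depth-two, weight one (resp.\ weight two) vector-valued transformation from Definition \ref{defgen}. The extra detail you supply on how the error terms split into $\mathcal{O}(R)$ and $\mathcal{Q}_{\kappa}(\chi)\mathcal{O}(R)$ pieces is exactly what the paper leaves implicit.
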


\begin{proof}
	%\begin{enumerate}[leftmargin=*]
	(1) We have, by \eqref{AsE1},
	$$
	\widehat{F}_1\left(\frac{h}{k}\right) = \lim_{t\to 0^+} F_1\left(e^{2\pi i\frac{ph}{k}-t}\right)=A_{hp_1,\frac{k}{p_2}}(0) = \lim_{t\to 0^+}\mathbb E_1\left(\frac{it}{2\pi}-\frac{h}{k}\right),
	$$
	where $p_1:= p/\gcd(k,p)$, $p_2:=\gcd(k,p)$. Corollary \ref{propE1} and Proposition \ref{propJ} then give the claim.
	
	(2) The relation \eqref{AsE2} gives 		$$
	\widehat{F}_2\left(\frac{h}{k}\right)= \lim_{t\to 0^+} {F}_2\left(e^{2\pi i\frac{ph}{k}-t}\right) = B_{hp_1,\frac{k}{p_2}}(0) = \lim_{t\to0^+}\mathbb E_2\left(\frac{it}{2\pi}-\frac{h}{k}\right).
	$$
	Corollary 5.4 and Proposition \ref{propGenTrans} then yields the claim.
	%\end{enumerate}
\end{proof}

\section{Higher Mordell integrals}

\subsection{Proof of Theorem \ref{theoremHigherMordell}}	

%\begin{proof}[Proof of Theorem \ref{theoremHigherMordell}] 
	We first assume that $\alpha_j\not \in \mathbb Z$. Via analytic continuation, it is enough to show the theorem for $\tau=iv$.
	We first claim that
	\begin{equation}\label{M2sum2}
	H_{1,\boldsymbol{\alpha}}(iv)=
	2\lim_{r \to \infty}\sum_{\substack{\boldsymbol{n}\in\boldsymbol{\alpha}+\Z^2\\ \lvert n_j-\alpha_j\rvert\leq r}}M_2\left(\sqrt{3}; \sqrt{\frac{v}{2}}\left(\sqrt{3}\left(2n_1+n_2\right), n_2\right)\right)e^{2\pi Q(\boldsymbol{n})v}.
	\end{equation}
    For this we write (which follows from shifting in (6.1) of \cite{BKM} $w_j\mapsto2iw_j-\overline \tau$)
	\begin{align*}
	e^{4\pi Q(\boldsymbol{n})v}M_2&\left(\sqrt{3};\sqrt{3v}(2n_1+n_2),\sqrt{v}n_2\right)\\
	&= \sqrt{3} \left(2n_1+n_2\right) n_2\int_0^\infty \frac{e^{-3\pi\left(2n_1+n_2\right)^2 w_1}}{\sqrt{w_1+v}}\int_{w_1}^\infty \frac{e^{-\pi n_2^2w_2}}{\sqrt{w_2+v}}dw_2dw_1\\
	&\quad+\sqrt{3} \left(3n_1+2n_2\right) n_1\int_0^\infty \frac{e^{-\pi\left(3n_1+2n_2\right)^2 w_1}}{\sqrt{w_1+v}}\int_{w_1}^\infty \frac{e^{-3\pi n_1^2w_2}}{\sqrt{w_2+v}}dw_2dw_1. 
	\end{align*}
	Then we change $v\mapsto \frac{v}{2}$, sum over those $\boldsymbol{n}\in\boldsymbol{\alpha}+\Z^2$ satisfying $\lvert n_j-\alpha_j\rvert \leq r$ and let $r\to\infty$. On the right-hand side we may use Lebesgue's dominated convergence theorem and can reorder the absolutely converging series inside the integral to obtain \eqref{M2sum2}.
	
To finish the proof, we rewrite \eqref{defineM2}, to obtain (assuming $N_2,N_1-\kappa N_2\neq0$) 
\begin{equation}\label{MordN}
M_2\left(\kappa; \sqrt{v}\boldsymbol{N}\right)
=-\frac1{\pi^2} e^{-\pi v\left(N_1^2+N_2^2\right)}\int_{\R^2}
\frac{e^{-\pi vw_1^2-\pi vw_2^2}}{\left(w_2-i N_2\right)\left(w_1-\kappa w_2-i\left(N_1-\kappa N_2\right)\right)}\boldsymbol{dw}.
\end{equation}
Thus in particular (for $n_1,n_2\neq 0$)
\begin{align*}
M_2\left(\sqrt{3}; \sqrt{\frac{v}{2}}\left(\sqrt{3}\left(2n_1+n_2\right), n_2\right)\right)&=-\frac1{\pi^2}e^{-2\pi Q(\boldsymbol{n})v}
\int_{\R^2}\frac{e^{-\frac{\pi vw_1^2}{2}-\frac{\pi vw_2^2}{2}}}{\left(w_2-in_2\right)\left(w_1-\sqrt{3}w_2-2\sqrt{3} in_1\right)}\boldsymbol{dw}\notag \\
&=-\frac1{\pi^2}e^{-2\pi Q(\boldsymbol{n})v}
\int_{\R^2}\frac{e^{-\frac{3\pi v\left(2w_1+w_2\right)^2}{2}-\frac{\pi vw_2^2}{2}}}{\left(w_2-in_2\right)\left(w_1- in_1\right)}\boldsymbol{dw},
\end{align*}
making the change of variables $w_1\mapsto 2\sqrt{3} w_1+\sqrt{3}w_2$. This implies that
\begin{multline}\label{sumM2}
\lim_{r \to \infty}\sum_{\substack{\boldsymbol{n}\in\boldsymbol{\alpha}+\Z^2\\ \lvert n_j-\alpha_j\rvert\leq r}}M_2\left(\sqrt{3}; \sqrt{\frac{v}{2}}\left(\sqrt{3}\left(2n_1+n_2\right), n_2\right)\right)e^{2\pi Q(\boldsymbol{n})v}\\
=-\frac1{\pi^2}\lim_{r \to \infty}\sum_{\substack{\boldsymbol{n}\in\boldsymbol{\alpha}+\Z^2\\ \lvert n_j-\alpha_j\rvert\leq r}}\int_{\R^2}
\frac{e^{-2\pi vQ(\boldsymbol{w})}}{\left(w_2-i n_2\right)\left(w_1-in_1\right)}\boldsymbol{dw}.
\end{multline}
Using 
\begin{equation*}
\pi \cot(\pi x) = \lim_{r \to \infty}\sum_{k=-r}^r \frac{1}{x+k},
\end{equation*}
we obtain that the sum over the integrand (without the exponential factor) is
\begin{align*}
&
-\lim_{r \to \infty}\sum_{\substack{\boldsymbol{n}\in\Z^2\\ \lvert n_j\rvert\leq r}}
\left(\frac{1}{iw_1+\alpha_1+n_1}\right)
\left(\frac{1}{iw_2+ \alpha_2+n_2}\right)
=
-\pi^2 \cot\left(\pi\left(iw_1+\alpha_1\right) \right)\cot\left(\pi\left(iw_2+ \alpha_2\right)\right)
.
\end{align*}
Using again Lebesgue's theorem of dominated convergence, one can show that one can interchange the limit and the integration in \eqref{sumM2} to obtain
	\begin{align*}
	H_{1,\boldsymbol{\alpha}}(\tau)=\int_{\R^2}
	\cot\left(\pi iw_1+\pi\alpha_1\right)
	\cot\left(\pi iw_2+\pi\alpha_2\right)e^{2\pi i\tau Q(\boldsymbol{w})}
	\boldsymbol{dw}.
	\end{align*}	
Using
\begin{equation*}
\cot(x+iy) = -\frac{\sin(2x)}{\cos(2x)-\cosh(2y)}+i\frac{\sinh(2y)}{\cos(2x)-\cosh(2y)}.
\end{equation*}
then yields, 
\begin{align}\label{Hint}
H_{1,\boldsymbol{\alpha}}(\tau)= 2\int_{\R^2} \left(\mathcal G_{\alpha_1}(w_1) \mathcal G_{\alpha_2}(w_2)-\mathcal F_{\alpha_1}(w_1)\mathcal F_{\alpha_2}(w_2)\right)e^{2\pi i\tau  Q(\boldsymbol{w})}\boldsymbol{dw}.
\end{align}
This gives the claim of Theorem \ref{theoremHigherMordell} in this case.

We next turn to the case that $\alpha_j\in\Z$ for exactly one $j\in\{1,2\}$. We only consider the case $\alpha_1\in\mathbb Z$, since the case $\alpha_2\in\Z$ goes analogously. Since the integrand in $H_{1,\boldsymbol{\alpha}}$ is invariant under $\alpha_j\mapsto \alpha_j+1$, we may assume that $\alpha_1=0$. One directly sees from \eqref{Hint} that in this case 
\[
H_{1,(0,\alpha_2)}(\tau)= -2\lim_{\alpha_1\to 0}\int_{\R^2}\mathcal F_{\alpha_1}(w_1)  \mathcal F_{\alpha_2}(w_2)e^{2\pi i\tau Q(\boldsymbol{w})}\boldsymbol{dw}.
\]
Using that $\mathcal F_0(-w_1)=-\mathcal F_0(w_1)$, we obtain
\begin{equation}\label{HO}
H_{1,(0,\alpha_2)}(\tau)= -\int_{\R^2}\mathcal F_0(w_1)  \mathcal F_{\alpha_2}(w_2)e^{2\pi i\tau\left(3w_1^2+w_2^2\right)}\sum_{\pm}\pm e^{\pm 6\pi i\tau w_1w_2}\boldsymbol{dw}.
\end{equation}
Now write
\[
\mathcal F_0(w_1)=\left(\mathcal F_0(w_1)-\frac{1}{\pi w_1}\right)+\frac{1}{\pi w_1}.
\]
The contribution of the first term to the integral now exists and gives, changing $w_1\mapsto -w_1$ for the minus sign
\begin{align*}
-2\int_{\R^2}\left(\mathcal F_0(w_1)-\frac{1}{\pi w_1}\right)  \mathcal F_{\alpha_2}(w_2)e^{2\pi i\tau Q(\boldsymbol{w})} \boldsymbol{dw}.
\end{align*}
For the second term we write
\begin{align}\label{Fdec}
\mathcal F_{\alpha_2}(w_2)&=\left(\mathcal F_{\alpha_2}(w_2)-\mathcal F_{\alpha_2}\left(w_2\pm \frac{3w_1}{2}\right)\right)+F_{\alpha_2}\left(w_2\pm \frac{3w_1}{2}\right).
\end{align}
The first term in \eqref{Fdec} contributes to \eqref{HO}, changing $w_1\mapsto -w_1$ for the minus sign
\begin{align*}
-\frac{2}{\pi} \int_{\R^2}w_1^{-1}\left(\mathcal F_{\alpha_2}(w_2)-\mathcal F_{\alpha_2}\left(w_2+  \frac{3w_1}{2}\right)\right) e^{2\pi i\tau Q(\boldsymbol{w})}\boldsymbol{dw}.
\end{align*}
For the final term in \eqref{Fdec} we use that $3w_1^2+w_2^2\pm 3w_1w_2=(w_2\pm\frac{3w_1}{2})^2+\frac34 w_1^2$, to obtain
\begin{align*}
-\frac{1}{\pi} \int_{\R}\frac{e^{\frac{3\pi i \tau w_1^2}{2}}}{w_1}\int_{\R} \sum_{\pm}\pm \mathcal{F}_{\alpha_2}\left(w_2\pm \frac{3w_1}{2}\right) e^{2\pi i  \tau\left(w_2\pm \frac{3w_1}{2}\right)^2}dw_2dw_1.
\end{align*}
The inner integral on $w_2$ now vanishes,
which may be seen by changing in the integral on $w_2$ for the minus sign $w_2\mapsto w_2+3w_1$. Combining, the theorem statement follows.

\subsection{Proof of Theorem \ref{diffTh}}
\begin{proof}[Proof of Theorem \ref{diffTh}]
From (6.3) and (6.4) of \cite{BKM}, one obtains that
\begin{align*}
&\frac{1}{2\pi i}\left[\frac{\partial}{\partial z} \left(M_2\left(\sqrt{3};\sqrt{3v}(2n_1+n_2),\sqrt{v}\left(n_2-\frac{2\im(z)}{v}\right)\right)e^{2\pi in_2z}\right)\right]_{z=0} e^{4\pi v Q(\boldsymbol{n}) }\\
&= -\frac{\sqrt{3}}{2\pi} (2n_1+n_2) \int_{0}^\infty \frac{e^{-\frac{3\pi}{2}(2n_1+n_2)^2w_1}}{\sqrt{w_1+2v}} \int_{w_1}^\infty \frac{e^{-\frac{\pi}{2}n_2^2w_2}}{(w_2+2v)^{\frac32}} dw_2dw_1\\
&\quad +\frac{\sqrt{3}}{4\pi} (3n_1+2n_2) \int_0^\infty \frac{e^{-\frac{\pi}{2}(3n_1+2n_2)^2w_1}}{\sqrt{w_1+2v}} \int_{w_1}^\infty \frac{e^{-\frac{3\pi n_1^2w_2}{2}}}{(w_2+2v)^{\frac32}}dw_2dw_1\\
&\quad - \frac{\sqrt{3}}{4\pi} n_1 \int_0^\infty \frac{e^{-\frac{\pi}{2}(3n_1+2n_2)^2 w_1}}{(w_1+2v)^{\frac32}} \int_{w_1}^\infty \frac{e^{-\frac{3\pi n_1^2w_2}{2}}}{\sqrt{w_2+2v}}dw_2dw_1.
\end{align*}
Then we sum over $\boldsymbol{n}\in\boldsymbol{\alpha}+\Z^2$ satisfying $\lvert n_j-\alpha_j\rvert \leq r$ and let $r\to\infty$. On the right hand side we use Lebesgue's dominated convergence theorem and can reorder the absolutely converging series inside the integral to obtain
\begin{multline*}
\frac{1}{2\pi i} \lim_{r \to \infty}\sum_{\substack{\boldsymbol{n}\in\boldsymbol{\alpha}+\Z^2\\ \lvert n_j-\alpha_j\rvert\leq r}} \left[\frac{\partial}{\partial z} \left(M_2\left(\sqrt{3};\sqrt{3v}(2n_1+n_2),\sqrt{v}\left(n_2-\frac{2\im(z)}{v}\right)\right) e^{2\pi in_2z}\right)\right]_{z=0} e^{4\pi v Q(\boldsymbol{n}) }\\=\frac{1}{2i} H_{2,\boldsymbol{\alpha}}(2iv).
\end{multline*}
We now use \eqref{MordN} and change variables $w_1\mapsto 2 \sqrt{3}w_1+\sqrt{3}w_2$, to obtain
\begin{align*}
&M_2\left(\sqrt{3};\sqrt{3v}(2n_1+n_2),\sqrt{v}\left(n_2-\frac{2\im(z)}{v}\right)\right) e^{2\pi in_2z} \\
&= -\frac{1}{\pi^2} e^{-\pi v\left(3(2n_1+n_2)^2 + \left(n_2-\frac{2\im(z)}{v}\right)^2\right)+2\pi in_2 z} \\
&\hspace{2cm}\times\int_{\R^2} \frac{e^{-\pi v w_1^2-\pi v w_2^2}}{\left(w_2-i\left(n_2-\frac{2\im(z)}{v}\right)\right)\left(w_1-\sqrt{3}w_2-i\left(2\sqrt{3}n_1+2\sqrt{3}\frac{\im(z)}{v}\right)\right)}\boldsymbol{dw}\\
&= -\frac{1}{\pi^2} e^{-\pi v\left(3(2n_1+n_2)^2 + \left(n_2-\frac{2\im(z)}{v}\right)^2\right)+2\pi in_2 z}\int_{\R^2} \frac{e^{-4\pi v Q(\boldsymbol{w})}}{ \left(w_2-i\left(n_2-\frac{2\im(z)}{v}\right)\right)\left(w_1-i\left(n_1+\frac{\im(z)}{v}\right)\right)}\boldsymbol{dw}.
\end{align*}
Thus
\begin{align*}
&\left[\frac{\partial}{\partial z}M_2\left(\sqrt{3};\sqrt{3v}(2n_1+n_2),\sqrt{v}\left(n_2-\frac{2\im(z)}{v}\right)\right) e^{2\pi in_2z}\right]_{z=0}e^{4\pi v Q(\boldsymbol{n})} \\
&\qquad= \frac{2}{\pi} \int_{\R^2} \frac{w_2e^{-4\pi vQ(\boldsymbol{w})}}{(w_2-in_2)\left(w_1-in_1\right)}\boldsymbol{dw}.
\end{align*}
Exactly as in the proof of Theorem \ref{theoremHigherMordell}, we then obtain 
\begin{multline*}
\lim_{r \to \infty}\sum_{\substack{\boldsymbol{n}\in\boldsymbol{\alpha}+\Z^2\\ \lvert n_j-\alpha_j\rvert\leq r}} \left[\frac{\partial}{\partial z}M_2\left(\sqrt{3};\sqrt{3v}(2n_1+n_2),\sqrt{v}\left(n_2-\frac{2\im(z)}{v}\right)\right) e^{2\pi in_2z}\right]_{z=0}e^{4\pi vQ(\boldsymbol{n})}\\
=\frac{\pi}{\sqrt{3}} \int_{\R^2} w_2 e^{-4\pi vQ(\boldsymbol{w})} \cot\left(\pi iw_2+\pi \alpha_2\right) \cot\left(\pi i w_1+\pi \alpha_1\right) \boldsymbol{dw}.
\end{multline*}
Observing that on the right hand side the integral over the real part vanishes, gives
\begin{align*}
H_{2,\boldsymbol{\alpha}}(\tau)=-2i\int_{\mathbb R^2}w_2\left(\mathcal G_{\alpha_1}(w_1)\mathcal F_{\alpha_2}(w_2)+\mathcal F_{\alpha_1}(w_1)\mathcal G_{\alpha_2}(w_2)\right) e^{2\pi i\tau Q(\boldsymbol{w})}\boldsymbol{dw}.
\end{align*}
The case $\alpha_1\not\in\mathbb Z$ follows directly.

For $\alpha_1\in\mathbb Z$, we obtain
\[
H_{2,\boldsymbol{\alpha}}(\tau)=-2i\int_{\mathbb R^2} \mathcal F_{0}(w_1)\mathcal G_{\alpha_2}^*(w_2)e^{2\pi i\tau Q(\boldsymbol{w})}\boldsymbol{dw}.
\]
Now the claim follows as in the proof of Theorem \ref{theoremHigherMordell}.
\end{proof}

\section{Future work}

Here we discuss a few future directions. We also announce a result that will appear in full detail in our forthcoming work \cite{BKM2}.

\subsection{Further examples of rank two false theta functions}
In addition to the function $F$ studied in \cite{BKM}, there are additional rank two false theta functions studied by the first and third author in \cite{BM2}.
To be more precise, define, for $1\leq s_1,s_2\leq p$
\begin{align*}
\mathbb F_{s_1,s_2}(q) &:= \sum_{\substack{m_1,m_2\geq 1 \\ m_1\equiv m_2\pmod{3}}}\min(m_1,m_2) q^{\frac{p}{3}\left(\left(m_1-\frac{s_1}{p}\right)^2 + \left(m_2-\frac{s_2}{p}\right)^2 + \left(m_1-\frac{s_1}{p}\right)\left(m_2-\frac{s_2}{p}\right)\right)} \\
&\qquad\qquad\times\left(1-q^{m_1s_1}-q^{m_2s_2} + q^{m_1s_1+(m_1+m_2)s_2}+q^{m_2s_2+(m_1+m_2)s_1}- q^{(m_1+m_2)(s_1+s_2)}\right).
\end{align*}
We will show in \cite{BKM2} that these series are also {higher depth} quantum modular forms with quantum set $\mathbb{Q}$. We believe that these series decompose 
into two vector-valued higher depth  quantum modular forms of weight one and two.

%\subsection{Asymptotic properties of Mordell integrals}

\subsection{Example: two-dimensional vector-valued quantum modular forms of depth two}

The previous  two-parametric family of rank two false theta functions $\mathbb{F}_{s_1,s_2}(q)$ takes a particularly nice shape for $p=2$.
In this case it can be shown that the only contribution comes from the weight one component and that only two false theta functions contribute. Their companions are double 
Eichler integrals with a basis
\begin{align*}
& \int_{-\overline{\tau}}^{i\infty}\int_{w_1}^{i\infty} \frac{\eta(w_1)^3  \eta(3w_2)^3 }{\sqrt{-i(w_1+\tau)} \sqrt{-i(w_2+\tau)}}dw_2dw_1 \qquad  \int_{-\overline{\tau}}^{i\infty}\int_{w_1}^{i\infty} \frac{\eta(w_1)^3 \eta\left(\frac{w_2}{3}\right)^3 }{\sqrt{-i(w_1+\tau)} \sqrt{-i(w_2+\tau)}}dw_2dw_1.
\end{align*}
This gives a two-dimensional vector-valued quantum modular form of depth two and weight one.

%{\bf AM: I think the paper can end here.}

%{\bf AM: new references added [10,11,12].}

\end{document}